\newtheorem{definition}{Definition}[section]
\newtheorem{theorem}{Theorem}[section]
\newtheorem{lemma}{Lemma}[section]
\newtheorem{proposition}{Proposition}[section]
\newtheorem{remark}{Remark}[section]
\numberwithin{equation}{section}
\journal{arXiv}
\begin{document}
	
	\begin{frontmatter}
		
		\title{The exponential turnpike property for periodic linear quadratic optimal control problems in infinite dimension
			\tnoteref{mytitlenote}}
		\tnotetext[mytitlenote]{This work was partially supported  by the National Natural Science Foundation of China under the grant 11971363, and by the Fundamental Research Funds for the Central Universities under the grant 2042023kf0193.}

		\author[mysecondaryaddress]{Emmanuel Tr\'elat}
		\ead{emmanuel.trelat@sorbonne-universite.fr}
		
		\medskip
		
		\author[my3address]{Xingwu Zeng}
		\ead{xingwuzeng@whu.edu.cn}
	
	\author[my4address]{Can Zhang}
		\ead{zhangcansx@163.com}

\address[mysecondaryaddress]{Sorbonne Universit\'e, CNRS, Universit\'e Paris Cit\'e, Inria, Laboratoire Jacques-Louis Lions (LJLL), F-75005 Paris, France}
\address[my3address]{School of Mathematics and Statistics, Wuhan University, Wuhan 430072, China}
\address[my4address]{School of Mathematics and Statistics, Wuhan University, Wuhan 430072, China}

\begin{abstract}
	In this paper, we establish an exponential periodic turnpike property for linear quadratic optimal control problems governed by periodic systems in infinite dimension.  We show that the optimal trajectory converges exponentially to a periodic orbit when the time horizon tends to infinity.  Similar results are obtained for the optimal control and adjoint state. Our proof is based on the large time behavior of solutions  of operator differential Riccati equations with periodic coefficients.
\end{abstract}

\begin{keyword}
	Periodic turnpike  \sep periodic Riccati equation \sep exponential stability
	
	\medskip
	
	\MSC[2010]  49J20, 49K20, 93D20
	
\end{keyword}

\end{frontmatter}


\tableofcontents


\section{Introduction}

The turnpike property was first observed in the context of finite-dimensional discrete-time optimal growth problems by economists (see, e.g., \cite{DGSW, LM}).
This property reflects the fact that, for an optimal control problem for which the time horizon is large enough, its optimal solution stays most of the time close to a turnpike set.

In many cases, the turnpike set is a singleton consisting of the minimizer of an optimal steady-state problem (see, e.g., \cite{GG1,  GG2, GTZ, GL, PZ, T1, TZZ, TZ, za}). But turnpike sets may be more complicated and may consist for instance of periodic orbits (see, e.g., \cite{FFOW, T1, TZ1}). In \cite{PAS}, Samuelson established a periodic turnpike property for a finite-dimensional optimal growth problem in economics, where the minimization functional is periodic in time. In \cite{TZ1,ZMG}, the authors considered the periodic turnpike property in the context of dissipativity. In \cite{SWY,SY}, the authors established a turnpike property for finite-dimensional stochastic LQ optimal control problems.

The simplest time-varying linear systems are those for which the coefficients are time periodic. Periodic linear systems arise frequently as the result of linearizing a nonlinear system along a periodic orbit. In \cite{Xu}, the author established a characterization of periodic stabilization in terms of a detectability inequality for  a linear periodic control system in a Hilbert space with a bounded control operator. The problem of tracking periodic signals for finite and infinite-dimensional linear periodic systems has been considered in \cite{AL} and \cite{AI}.

The authors of \cite{TZZ} established a periodic turnpike property for linear quadratic (LQ) optimal control problems with periodic tracking terms for time-invariant systems under exponential stabilizability and detectability assumptions as well as some smallness assumptions.
The main ingredient in \cite{TZZ} is a dichotomy transformation for the solutions of the algebraic Riccati and Lyapunov equations.

In the present paper we investigate the periodic exponential turnpike property for infinite-dimensional LQ  optimal control problems with periodic coefficients, under appropriate periodic exponential stabilizability and detectability assumptions. 
The goal is to show that, except at the extremities of the time interval, the optimal trajectory (also, control and adjoint state) remains exponentially close to a periodic optimal trajectory, which itself is characterized as the optimal solution of an associated periodic optimal control problem.
This widely generalizes the above-mentioned result of \cite{TZZ} and the technique of proof is entirely different. 

Our approach here exploits the exponential stabilizability of the evolution operator resulting from the operator differential Riccati equation, and an exponential estimate between the solution of the differential Riccati equation with a zero terminal value  
to its periodic one, when the time horizon tends to infinity (see Proposition~\ref{dudic}). This new exponential estimate is at the heart of the proof. The techniques that we use are inspired from earlier works by Da Prato and Ichikawa (see \cite{D, PI, PI1, AI}).

The paper is organized as follows. 
In Section~\ref{free}, we introduce the LQ optimal control problem for periodic systems, and we state our main result, Theorem~\ref{periodictarget}.
In Section~\ref{auxiresults}, we first establish some instrumental properties of the evolution operator resulting from the Riccati equation, and then we state and prove Proposition~\ref{dudic}. Theorem~\ref{periodictarget} is proved in Section~\ref{proof}. Section~\ref{comment} gives some further comments.


\section{Main result}\label{free}


\subsection{Formulation of the LQ optimal control problem}

Throughout the paper, we use $U$ and $H$ to denote Hilbert spaces, and use $\left\langle\cdot,\cdot \right\rangle$ and $\|\cdot\|$ to denote the inner product and norm in all spaces without causing any confusion.
We denote by $L(U, H)$ the space of linear bounded operators from $U$ to $H$.
We set $\Sigma(H)=\left\{P \in L(H) ; P=P^*\right\}$ and $\Sigma^{+}(H)=\{P \in \Sigma(H) ; P \geq 0\}$, where $ P^* $ denotes the adjoint operator of $ P $.
We denote by
$C\big([a, b] ; L(H)\big)$  the space of all mappings $P:[a,b] \rightarrow L(H)$ such that 
$P(\cdot) x$ is continuous for any $x \in H$ with the norm 
$
\|P\|=\sup \big\{\|P(t)\|: t \in[a, b]\big\}.
$

Given any $T>0$, we consider the time-periodic linear quadratic optimal control problem
\begin{equation*}\label{12181}
(LQ)^T:\; \inf_{u\in L^2(0,T;U)}\;\;\frac{1}{2}\int_0^T 
\Big(\big\|C(t)\big( y(t)-y_d(t)\big) \big\|^2+\big \|Q^{1/2}(t)u(t)\big\|^2\Big)\,dt,
\end{equation*}
where $y_d(\cdot) \in C([0, +\infty);H)$ is $\theta$-periodic, i.e. $y_d(t+\theta)=y_d(t) $ for every $ t \in \mathbb{R}$, and $(y(\cdot),u(\cdot)) \in C([0,T];H)\times L^2(0,T;U)$ satisfies 
the controlled system
\begin{equation}\label{2.1}
\left\{
\begin{split}
	&\dot{y}(t)=A(t)y(t)+B(t)u(t),\;\;t \in [0,T],\\
	&y(0)=y_0\in H,
\end{split}\right.
\end{equation}
where $ A(\cdot) $, $ B(\cdot) $, $ C(\cdot) $ and $ Q(\cdot) $ are $\theta$-periodic, i.e. $A(t+\theta)=A(t)$ for every $t \in \mathbb{R}$, and the same for the others.
Here, for each $t$,
$A(t)$ is a linear unbounded operator in $H$, $ B(\cdot) \in C\big(\mathbb{R}, L(U, H)\big) $, $ C(\cdot) \in C\big(\mathbb{R}, \Sigma^{+}(H)\big) $, $ Q(\cdot) \in C\big(\mathbb{R}, \Sigma^{+}(U)\big) $, and there exists $\varepsilon>0$ such that $Q(t) \geq \varepsilon I$ for every $ t \in \mathbb{R}$. 

Moreover, we assume that $ A(\cdot) $ generates an evolution system, i.e., there exists a strongly continuous mapping $  U_A :\{(t, s) \mid t \geqslant s\} \rightarrow L(H) $ such that $$ \frac{\partial}{\partial t} U_A(t, s)   =A(t) U_A(t, s) , \;\; U_A(s, s)  =I,\;\; U_A(t, r)U_A(r, s)=U_A(t, s)\;\; \text{for all}\;\; 0 \leqslant s \leqslant r \leqslant t, $$
and there exists Yosida's approximation $ A_n(t)=n^2(n-A(t))^{-1}-n I $ for every $t \geqslant 0 $, for $ n $ large enough so that $$ \lim _{n \rightarrow \infty} U_{A_n}(t, s) x=U_A(t, s) x,\; \forall x \in H, \; \text{uniformly on}  \; \{(t, s) \mid 0 \leqslant s \leqslant t \leqslant \theta\}. $$

\begin{remark}
The conditions about the operators are fulfilled under the usual hypotheses of Tanabe and Kato-Tanabe (see, for instance, \cite{AT}, \cite{CP}, \cite{IK}, \cite{L}, \cite{AP} and \cite{HT}). Sometimes, the strongly continuous mapping $U_A(\cdot, \cdot)$ is called the evolution operator associated with $A(\cdot)$. 
It can be noted that Yosida's approximation is satisfied if, for instance, the family of $ A(\cdot) $ is dissipative or quasi-dissipative (see, for instance, \cite{IK}), which is a standard assumption in that context.
The existence of $U_{A_n}(\cdot, \cdot)$, the evolution operator associated with $A_n (\cdot)$, is clear, since $A_n(t)$ is bounded for each $t \geqslant 0$ (see, for instance, \cite{PI1}).
\end{remark}

\begin{remark}
The system $\eqref{2.1}$ has a unique mild solution given by
\begin{equation*}
	y(t)=U_A(t,0)y_0+\int_0^t U_A(t,s)B(s)u(s)ds.
\end{equation*}
Moreover, the evolution operator $U_A(\cdot,\cdot)$ is $\theta$-periodic, i.e.,
\begin{equation*}
	U_A(t+\theta,s+\theta)=U_A(t,s),\;\;\forall \;0\leqslant s\leqslant t.
\end{equation*}
\end{remark}

The problem $(LQ)^T$ has a unique optimal solution denoted by $(y^T(\cdot),u^T(\cdot))$.\footnote{The optimal solution depends on $T$, and we add a superscript to emphasize this fact.} Moreover, according to \cite[Chapter 4, Theorem 1.6]{LY}, there exists an adjoint state $\lambda^T(\cdot)\in C([0,T];H)$ such that
\begin{equation}
\left\{
\begin{split}
	\dot{y}^T(t) &= A(t)y^T(t)-B(t)Q^{-1}(t)B^*(t)\lambda^T(t), \\
	\dot{\lambda}^T(t) &= -C^*(t)C(t)y^T(t)-A^*(t)\lambda^T(t)+C^*(t)C(t)y_d(t),
\end{split}
\right.
\end{equation}
in the mild sense along $[0,T]$, with the two-point boundary condition
\begin{equation}
y^T(0)=y_0\;\;\;\text{and}\;\;\;
\lambda^T(T)=0.
\end{equation}
Furthermore, the optimal control is given by 
\begin{equation*}
u^T(t)=-Q^{-1}(t)B^*(t)\lambda^ T (t),\;\;\text{for a.e.}\;\;t\in[0,T].
\end{equation*}

\subsection{The periodic turnpike theorem}
In order to define the turnpike set, 
we consider the periodic optimal control problem:
\begin{equation*}\label{xiao-2}
(Per)_\theta:\; \inf\;\;  
\frac{1}{2}\int_0^\theta \left( \big\|C(t)\big( y(t)-y_d(t)\big) \big\|^2+\big \|Q^{1/2}(t)u(t)\big\|^2\right) dt,
\end{equation*}
over all pairs  $(y(\cdot),u(\cdot))\in C([0,\theta];H)\times L^2(0,\theta;U)$ satisfying
\begin{equation}\label{2.6}
\left\{
\begin{split}
	&\dot{y}(t)=A(t)y(t)+B(t)u(t),\;\;t\in[0,\theta],\\
	&y(0)=y(\theta).\\
\end{split}\right.
\end{equation}

The system $\eqref{2.6}$ does not necessarily have a periodic solution for a given $\theta$-periodic control $u \in L^2(0,\theta; U)$. This happens when no Floquet exponent of $A(\cdot)$ is equal to
$1$, i.e., when $1$ belongs to the resolvent set\footnote{The resolvent set $\rho(A)$ of a closed linear operator $A$ is the set of all complex numbers $\lambda$ such that the operator $A_\lambda=\lambda I-A$ has a bounded inverse.} $\rho(U_A(\theta, 0))$ of $U_A(\theta, 0)$, or that  $U_A(\cdot, \cdot)$ is exponentially stable (see, for instance, \cite[Proposition 2.1]{PI}). In fact, if $1$ belongs to the resolvent set  $\rho(U_A(\theta, 0))$, then the system $\eqref{2.6}$ has a unique mild solution for every $\theta$-periodic control $u \in L^2(0,\theta; U)$, given by
\begin{equation*}
y(t)=U_A(t,0)\left[I-U_A(\theta,0)\right]^{-1} \int_0^\theta U_A(\theta,s)B(s)u(s) ds+\int_0^t U_A(t,s)B(s)u(s)ds.
\end{equation*}
Actually, under some specific conditions, the exponentially stability of $U_A(\cdot,\cdot)$ implies that $1$ belongs to the resolvent set $\rho(U_A(\theta, 0))$ of $U_A(\theta, 0)$ (see, for instance, \cite[Corollary 2.1]{BP}).

Existence and uniqueness of a periodic solution for such periodic problems (Per)$_\theta$ under certain conditions, as well as  necessary and sufficient conditions for optimality, have been  widely studied in the existing literature (see, for instance, \cite{BP}, \cite{D} or \cite[Chapter 4, Proposition 5.2]{LY}).
It is well known that if $\left(y_\theta(\cdot),u_\theta(\cdot)\right)$ is an optimal pair for $(\text{Per})_\theta$, then there exists an adjoint variable $\lambda_\theta(\cdot)\in C([0,\theta];H)$ such that
\begin{equation}\label{xiaoextremalsystLQ}
\left\{
\begin{split}
	\dot{y}_\theta(t) &= A(t)y_\theta(t)-B(t)Q^{-1}(t)B^*(t)\lambda_\theta(t), \\
	\dot{\lambda}_\theta(t) &= -C^*(t)C(t)y_\theta(t)-A^*(t)\lambda_\theta(t)+C^*(t)C(t)y_d(t),
\end{split}
\right.
\end{equation}
in the mild sense along $[0,\theta]$, with the periodic boundary conditions
\begin{equation}\label{du12061}
y_\theta(0)=y_\theta(\theta)\;\;\;\text{and}\;\;\;
\lambda_\theta(0)=\lambda_\theta(\theta).
\end{equation}
Moreover, the optimal periodic control is given by 
\begin{equation}\label{xiao61718}
u_\theta(t)=-Q^{-1}(t)B^*(t)\lambda_\theta(t),\;\;\text{for a.e.}\;\;t\in[0,\theta].
\end{equation}
We say that $\big(y_\theta(\cdot),u_\theta(\cdot),\lambda_\theta(\cdot)\big)$ is the periodic solution of (Per)$_\theta$.

In our main result (Theorem~\ref{periodictarget}) hereafter, the assumptions that we do indeed ensure the existence and uniqueness of the periodic solution of (Per)$_\theta$,
which is the turnpike set around which the exponential turnpike property is established. To introduce these assumptions, we next recall the definitions
of exponential stabilizability and detectability in the time-periodic framework.
\begin{definition}\label{du1206}
The periodic pair $(A(\cdot),B(\cdot))$ is called exponentially $\theta$-periodic stabilizable 
if there exists a feedback $\theta$-periodic function
$K_1(\cdot)\in C\big(\mathbb R;L(H,U)\big)$ such that the following closed-loop system
is exponentially stable:
\begin{equation*}
	\dot y(t)=\big(A(t)+B(t)K_1(t)\big)y(t),\;\;t>0.
\end{equation*} 
The periodic pair $(A(\cdot), C(\cdot))$ is called exponentially $\theta$-periodic detectable 
if $(A^*(\cdot),C^*(\cdot))$ is exponentially $\theta$-periodic stabilizable, i.e., there exists a feedback $\theta$-periodic function
$K_2 (\cdot)\in C\big(\mathbb R;L(H)\big)$ such that the following closed-loop system
is exponentially stable:
\begin{equation*}
	\dot y(t)=\big(A(t)+K_2(t)C(t)\big)y(t),\;\;t>0.
\end{equation*} 
\end{definition}

\begin{theorem}\label{periodictarget}
Assume that $(A(\cdot),B(\cdot))$ is exponentially $\theta$-periodic stabilizable,  and that $(A(\cdot),C(\cdot))$ is exponentially $\theta$-periodic detectable.
Then the problem $ (Per)_\theta $ has a unique solution
$\big(y_\theta(\cdot),u_\theta(\cdot),\lambda_\theta(\cdot)\big)$ (extended by $\theta$-periodicity over the whole real line). Moreover, we
have the exponential periodic turnpike property: there exist two positive constants $C$ and $\nu$ such that for any $T > 0$ large enough, the optimal solution $(y^T(\cdot),u^T(\cdot), \lambda^T(\cdot))$ of 
$(LQ)^T$ satisfies 
\begin{equation}\label{10271}
	\left\Vert y^T(t)-y_\theta(t)\right\Vert+\left\Vert u^T(t)-u_\theta(t)\right\Vert+\left\Vert \lambda^T(t)-\lambda_\theta(t)\right\Vert
	\leqslant C\Big( e^{-\nu t}+e^{-\nu (T-t)}\Big) ,
\end{equation}
for every $t\in[0,T]$. 
\end{theorem}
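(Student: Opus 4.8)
\emph{Plan of proof.} The plan is to decouple both the optimality system of $(LQ)^T$ and its periodic counterpart by means of the associated operator differential Riccati equations, and then to transport the exponential estimate of Proposition~\ref{dudic} through the resulting (exponentially stable) decoupled linear equations.

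\emph{Step 1: Riccati decoupling.} Let $E^T(\cdot)$ be the mild solution on $[0,T]$ of the operator differential Riccati equation
\begin{equation*}
\dot E+A^*E+EA-EBQ^{-1}B^*E+C^*C=0,\qquad E(T)=0,
\end{equation*}
and let $E_\theta(\cdot)$ be the unique nonnegative $\theta$-periodic solution of the same equation; their existence, uniqueness and uniform boundedness, as well as the exponential stability of the evolution operator generated by $\mathcal A_\theta:=A-BQ^{-1}B^*E_\theta$, are the classical consequences of the periodic stabilizability and detectability assumptions (see Section~\ref{auxiresults} and \cite{D,PI,PI1}). Substituting the ansatz $\lambda^T=E^Ty^T+r^T$ into the optimality system of $(LQ)^T$, one finds that $r^T$ solves the backward linear equation $\dot r^T=-(\mathcal A^T)^*r^T+C^*Cy_d$ with $r^T(T)=0$, that $y^T$ solves $\dot y^T=\mathcal A^Ty^T-BQ^{-1}B^*r^T$ with $y^T(0)=y_0$, and that $u^T=-Q^{-1}B^*\lambda^T$, where $\mathcal A^T:=A-BQ^{-1}B^*E^T$; the analogous identities, with periodic boundary conditions, characterize $r_\theta$, $y_\theta$, $\lambda_\theta$, $u_\theta$. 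Since $Q\geq\varepsilon I$ makes $(Per)_\theta$ strictly convex, its optimality system is necessary and sufficient for optimality, and the exponential stability of $\mathcal A_\theta$ makes its periodic solution unique; this proves the first assertion of the theorem.

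\emph{Step 2: uniform exponential stability of the closed loop.} The main obstacle, I expect, will be to establish that there exist $M,\omega>0$, \emph{independent of $T$}, such that $\|U_{\mathcal A^T}(t,s)\|\leq Me^{-\omega(t-s)}$ for all $0\leq s\leq t\leq T$ and all $T$ large enough. I would split $[0,T]=[0,T-R]\cup[T-R,T]$ with $R$ large but fixed: on $[0,T-R]$, Proposition~\ref{dudic} gives $\|E^T(t)-E_\theta(t)\|\leq Ce^{-\nu_0 R}$, so $\mathcal A^T$ is a small bounded perturbation of $\mathcal A_\theta$, which generates an exponentially stable evolution, and $U_{\mathcal A^T}$ inherits exponential decay with $T$-independent constants there; on the bounded trailing interval $[T-R,T]$ the uniform bound on $E^T$ gives a $T$-independent bound on $U_{\mathcal A^T}$; concatenating the two estimates (and paying a fixed factor $e^{\omega R}$) yields the claim. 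Alternatively, one may use that $t\mapsto\langle E^T(t)\phi(t),\phi(t)\rangle$ is nonincreasing along closed-loop trajectories $\dot\phi=\mathcal A^T\phi$ — a direct computation from the Riccati equation gives $\frac{d}{dt}\langle E^T\phi,\phi\rangle=-\|Q^{-1/2}B^*E^T\phi\|^2-\|C\phi\|^2$ — together with a $\theta$-periodic detectability inequality.

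\emph{Step 3: propagation of the estimate.} Set $\rho:=r^T-r_\theta$ and $z:=y^T-y_\theta$. Subtracting the corresponding equations gives
\begin{equation*}
\dot\rho=-(\mathcal A^T)^*\rho+(E^T-E_\theta)BQ^{-1}B^*r_\theta,\qquad\rho(T)=-r_\theta(T),
\end{equation*}
\begin{equation*}
\dot z=\mathcal A^Tz-BQ^{-1}B^*(E^T-E_\theta)y_\theta-BQ^{-1}B^*\rho,\qquad z(0)=y_0-y_\theta(0).
\end{equation*}
Applying the variation of constants formula with $U_{\mathcal A^T}$ and its adjoint, using the uniform exponential bound of Step 2, the uniform boundedness of $E^T,E_\theta,y_\theta,r_\theta,B,Q^{-1},C$, and the estimate $\|E^T(t)-E_\theta(t)\|\leq Ce^{-\nu_0(T-t)}$ of Proposition~\ref{dudic}, the resulting convolutions of exponentials give, for some $\nu\in(0,\min\{\omega,\nu_0\})$,
\begin{equation*}
\|r^T(t)-r_\theta(t)\|\leq Ce^{-\nu(T-t)},\qquad\|y^T(t)-y_\theta(t)\|\leq C\big(e^{-\nu t}+e^{-\nu(T-t)}\big),\qquad t\in[0,T].
\end{equation*}
Finally $\lambda^T-\lambda_\theta=E^T(y^T-y_\theta)+(E^T-E_\theta)y_\theta+(r^T-r_\theta)$ and $u^T-u_\theta=-Q^{-1}B^*(\lambda^T-\lambda_\theta)$ inherit the same bound, which is \eqref{10271}. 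All the differential manipulations above are to be justified first on the Yosida approximations $A_n(\cdot)$ and then passed to the limit, as in Section~\ref{auxiresults}.
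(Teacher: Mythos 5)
Your proposal is correct and rests on the same three pillars as the paper: the Riccati decoupling of both optimality systems, the exponential estimate $\|P^T(t)-P_\theta(t)\|\leqslant Me^{-\mu(T-t)}$ of Proposition~\ref{dudic}, and the $T$-uniform exponential stability of the closed-loop evolution operator (your Step~2 is exactly Proposition~\ref{prop3.3}, and your primary argument for it --- splitting $[0,T]$ at $T-S$, treating $A-BQ^{-1}B^*P^T$ as a small perturbation of the periodic closed loop on the first piece, and concatenating --- is the paper's proof). The only genuine divergence is the final propagation step. You take the $T$-dependent closed loop $A-BQ^{-1}B^*P^T$ as reference and estimate $r^T-r_\theta$ and $y^T-y_\theta$ separately by two Duhamel formulas whose source terms are controlled by Proposition~\ref{dudic}; this is valid (I checked the two difference equations you write, and the convolution estimates close), at the cost of two extra convolution bounds and a slight loss in the exponent, $\nu<\min\{\omega,\nu_0\}$. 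The paper instead takes the $\theta$-periodic closed loop $F_\theta=A-BQ^{-1}B^*P_\theta$ as reference and observes that the particular combination $z=(r_\theta-r^T)+(P_\theta-P^T)y^T$ solves the \emph{homogeneous} backward equation $\dot z=-F_\theta^*z$, so that $\|z(t)\|\leqslant Me^{-\rho(T-t)}\|z(T)\|$ follows at once from Lemma~\ref{du120505}, with $\|z(T)\|$ bounded through $\|y^T(T)\|$ via Proposition~\ref{prop3.3}; this bypasses any separate estimate of $r^T-r_\theta$ and delivers the decay rate of $U_\theta$ directly, since $\lambda^T-\lambda_\theta=P_\theta(y^T-y_\theta)-z$. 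Both routes reach \eqref{10271}; yours is a little more modular, the paper's a little shorter and sharper.
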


\begin{remark}
The exponential decay constant $\nu$ in $\eqref{10271}$ is the exponential stability rate for the evolution operator resulting from the operator Riccati equation in $\eqref{riccat2}$ below, and the constant $C$ in $\eqref{10271}$ is of the form $C_1\big(\|y_\theta(0)-y_0\|+\left( 1+\|y_0\| \right) \big)$, where the constant $C_1$ does not depend on $y_0$ and $y_d(\cdot)$.
\end{remark}

\subsection{Examples}
\subsubsection{Periodic heat equation}\label{example}
Let $\Omega$ be an open and bounded domain of $\mathbb{R}^n$ with a $ C^2 $ boundary $\partial\Omega$. Let $\omega \subseteq \Omega$ be a non-empty open subset with its characteristic function $\chi_\omega$. Given  $T>0$ and $y_0 \in  L^2(\Omega)$,  we consider the following optimal control problem:
\begin{equation}\label{5.1}
\inf_{u\in L^2(0,T;L^2(\Omega))} \frac{1}{2} \int_0^T  \int_{\Omega}\left[|y(x, t)-y_d(x, t)|^2+|u(x, t)|^2\right] d x d t,
\end{equation}
subject to
\begin{equation*}
\left\{\begin{array}{l}
	\partial_t y(x, t)=\Delta y(x, t)-a(x, t) y(x, t)+\chi_\omega(x)u(x, t)\quad \text { in } \Omega \times(0, T) ,\\
	y(x, t)=0 \quad \text { on } \partial\Omega \times(0, T) , \\ 
	y(x, 0)=y_0 \quad \text { in } \Omega,
\end{array}\right.
\end{equation*}
where $a \in C(\bar{\Omega} \times[0, \infty))$ and $y_d \in C(\bar{\Omega} \times[0, \infty))$ are $\theta$-periodic with respect to the time variable, and $\Delta$ is the classical  Laplace operator. The periodic optimal control problem (Per)$_\theta$ reads as
\begin{equation}\label{5.2}
\inf_{u\in L^2(0,\theta;L^2(\Omega))} \frac{1}{2} \int_0^\theta  \int_{\Omega}\left[|y(x, t)-y_d(x, t)|^2+|u(x, t)|^2\right] d x d t,
\end{equation}
subject to
\begin{equation*}
\left\{\begin{array}{l}
	\partial_t y(x, t)=\Delta y(x, t)-a(x, t) y(x, t)+\chi_\omega(x)u(x, t)\quad \text { in } \Omega \times(0, T) ,\\
	y(x, t)=0 \quad \text { on } \partial\Omega \times(0, T) , \\ 
	y(x, 0)=y(x, \theta) \quad \text { in } \Omega.
\end{array}\right.
\end{equation*}

We take $H$ = $U$ = $L^2(\Omega)$, $C(t)$ = $Q(t)$ = $I$, $B(t)$ = $\chi_\omega$ for each $t \in [0, T]$, and 
\begin{equation*}
A(t)z=\Delta z-a(\cdot, t) z, \;\; \forall z \in H^2(\Omega) \cap H_0^1(\Omega). 
\end{equation*}
Clearly, $D(A(t))=H^2(\Omega) \cap H_0^1(\Omega)$, and $A(\cdot)$ generates an evolution operator in $L^2(\Omega)$.
By \cite[Corollary 2.1]{WX}, $(A(\cdot),B(\cdot))$ is exponentially $\theta$-periodic stabilizable, and $(A(\cdot),C(\cdot))$ is exponentially $\theta$-periodic detectable. Therefore, according to Theorem~\ref{periodictarget}, the periodic optimal control problem (Per)$_\theta$ has a unique solution, and the optimal control problem under consideration has the periodic exponential turnpike property \eqref{10271}.

\subsubsection{Periodic wave equation}\label{example1}
Given $ \ell >0 $,
let $ y_d \in C([0,\infty );L^2(0,\ell )) $ be a 1-periodic tracking trajectory, i.e., $ y_d (\cdot,t) = y_d (\cdot,t+1) $ for any $ t \geqslant 0 $. Given  $T>1$,  we consider the following optimal control problem:
\begin{equation}\label{5.2}
	\inf_{u\in L^2(0,T;L^2(0,\ell))} \frac{1}{2} \int_0^T  \int_0^\ell |y_t (x, t)-y_d(x, t)|^2  dxdt+\frac{1}{2} \int_0^T  \int_0^\ell |u(x, t)|^2 d x d t,
\end{equation}
over all possible $ (y,u) \in \left(C([0,T];H_0^1 (0,\ell ))\cap C^1 ([0,T];L^2 (0,\ell ))\right) \times L^2(0,T;L^2(0,\ell)) $ satisfying
\begin{equation*}
	\left\{\begin{array}{l}
		\partial_{tt} y(x, t)=\partial_{xx} y(x, t)-a(x, t) \partial_t y(x, t)+u(x, t),\quad (x,t) \in (0,\ell) \times(0, T) ,\\
		y(0, t)=y(\ell,t) = 0, \quad t\in  (0, T) , \\ 
		y(x, 0)=y_0 (x),\;\;y_t (x,0) = y_1 (x), \quad x\in (0,\ell),
	\end{array}\right.
\end{equation*}
where $a \in C([0,\ell] \times[0, \infty))$ is $ 1 $-periodic with respect to the time variable, $ y_0 \in H_0^1 (0,\ell ) $ and $ y_1 \in L^2 (0,\ell) $. 

We take $ H=H_0^1 (0,\ell) \times L^2 (0,\ell ),\; U=L^2 (0,\ell) $. For each $ t \in[0,T] $, define
\begin{equation*}
	A(t) = \begin{pmatrix}
		0  &I\\ \partial_{xx}  &0
		\end{pmatrix}+\begin{pmatrix}
		0  &0\\0  &-a(\cdot,t)
	\end{pmatrix} := A_w + R(t),
\end{equation*}
\begin{equation*}
	B(t)=\begin{pmatrix}
		0\\ I
	\end{pmatrix},	C(t)=\begin{pmatrix}
	0 \quad 0 \\
	0  \quad I
\end{pmatrix},
\end{equation*}
where $ I $ is the identity operator, and $ D(A(t))=\left(H^2 (0,\ell) \cap H_0^1 (0,\ell)\right)\times H_0^1(0,\ell) $. There exist $ K_s \in L(H,U) $ and $ K_d \in L(H) $ such that $A_w +B(\cdot)K_s$ and $ A_w ^* +C^* (\cdot)K_d $ is exponentially stable (see, e.g., \cite[Example 4.3]{PI}). Moreover note that $ A^* (\cdot) =-A_w +R(\cdot) $, $ B(\cdot) B^* (\cdot)= C(\cdot) $, and $ C^* (\cdot)C(\cdot)  = C(\cdot) $. Hence $(A(\cdot),B(\cdot))$ is exponentially $1$-periodic stabilizable with $ K_1 (\cdot):=K_s -B^* (\cdot)R(\cdot) $ and $(A(\cdot), C(\cdot))$ is exponentially $1$-periodic detectable with $ K_2 (\cdot):=K_d - C(\cdot)R(\cdot) $.  Therefore, according to Theorem~\ref{periodictarget}, the optimal control problem under consideration has the periodic exponential turnpike property \eqref{10271}.

\subsection{A numerical simulation}
In this section, we provide a simple example to numerically illustrate the periodic turnpike phenomenon in the finite-dimensional case.
Given any $ T > 0 $, we consider the LQ optimal control problem of
minimizing the cost functional
\begin{equation*}
	\frac{1}{2}\int_0^T 
	\Big[\big(4-\sin^2t-\cos t\big)\big( y(t)-\cos t\big)^2+u^2(t)\Big]\;dt
\end{equation*}
for the one-dimensional control system
\begin{equation*}
	\dot y (t) = \sin t \;y(t) + u(t),\quad t\in (0,T),
\end{equation*}
with a fixed initial condition 
$y(0)=0.1$. 

To fit in our framework, we set for each $t\in(0,T)$
\begin{equation*}
	A(t) = \sin t,\quad B(t)=1,\quad Q(t) = 1,\quad C^* (t) C(t) = 4-\sin^2 t-\cos t,\quad y_d (t) = \cos t.
\end{equation*}
Using MATLAB, 
\begin{itemize}
	\item First, we compute the periodic solution $ P_\theta $ of the periodic Riccati equation \eqref{riccat2}.
	\item Second, we compute the periodic solution $ r_\theta $ of the equation \eqref{r_theta}.
	\item Third, we compute the periodic turnpike $ (y_\theta,\lambda_\theta,u_\theta) $ by  
	\eqref{poc}-\eqref{adp}.
\end{itemize}
The optimal extremal $ (y^T , \lambda^T, u^T) $, resulting from the first-order optimality system derived from the Pontryagin maximum principle, can be computed in time $ T = 50 $, by the MATLAB function \texttt{bvp4c}.
The turnpike property can be observed in the Figure~\ref{fig:f1} below. As expected, except for the transient
initial and final arcs, the extremal $ (y^T , \lambda^T, u^T) $ (in red) remains close to
the periodic turnpike $ (y_\theta,\lambda_\theta,u_\theta) $ (in blue).
\begin{figure}
	\centering
	\includegraphics[scale=0.25]{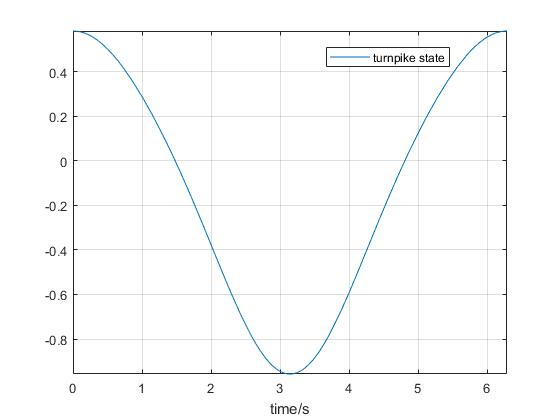}
	\includegraphics[scale=0.25]{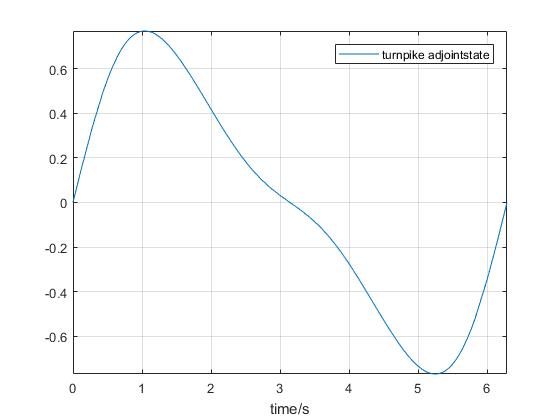}
	\includegraphics[scale=0.25]{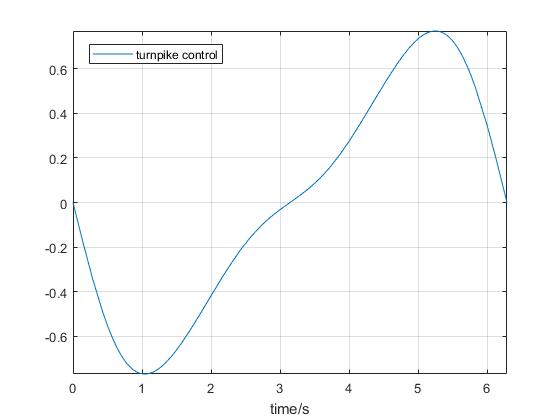}
	\includegraphics[scale=0.25]{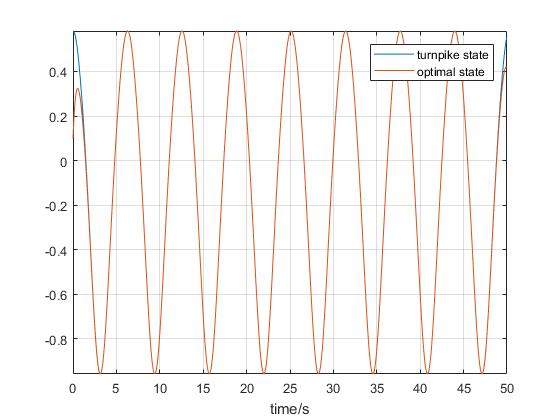}
	\includegraphics[scale=0.25]{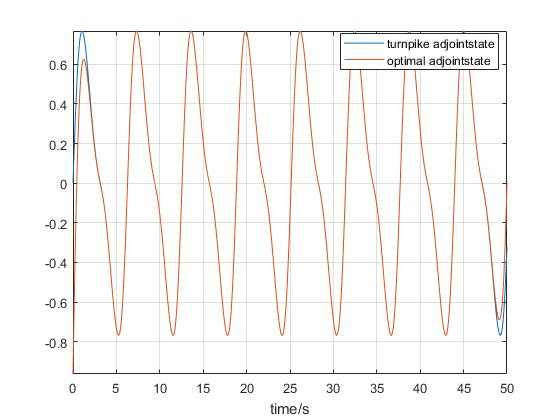}
	\includegraphics[scale=0.25]{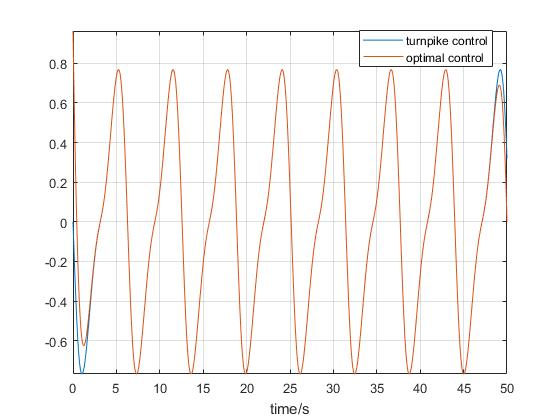}
	\caption{Example of a periodic turnpike. \label{fig:f1}}	
\end{figure}

\section{Auxiliary results}\label{auxiresults}

\subsection{Reminders on differential Riccati equations}
We first state two preliminary results, see their proofs, for instance, in \cite[Proposition 3.1 and Theorems 3.8]{D}, \cite[Proposition 3.4]{PI1}, respectively. More precisely, Lemma \ref{ric} and Lemma \ref{ric1} are about the existence and uniqueness of solutions to the Riccati  differential equation. Next, we recall the corresponding value function and the solution of the  problem $(LQ)^T$ and the problem (Per)$_\theta$ given by  the extended Riccati equation.
To do this, we first introduce the differential Riccati equation
\begin{equation}\label{riccat1}\left\{
	\begin{split}
		&\dot{P}(t)+A^*(t)P(t)+P(t)A(t)-P(t)B(t)Q^{-1}(t)B^*(t)P(t)+C^*(t)C(t)=0,\;\;t\in (0, T), \\
		&P(T)=0 ,
	\end{split}
	\right.
\end{equation}
and the differential periodic Riccati equation
\begin{equation}\label{riccat2}
	\left\{
	\begin{split}
		&\dot{P}(t)+A^*(t)P(t)+P(t)A(t)-P(t)B(t)Q^{-1}(t)B^*(t)P(t)+C^*(t)C(t)=0,\;\;t\in (0, \theta), \\
		&P(\theta)=P(0).
	\end{split}
	\right.
\end{equation}

\begin{lemma}\label{ric}
Equation $\eqref{riccat1}$ admits a unique mild solution\footnote{We say that $P(\cdot)\in C([0,T];H)$ is a mild solution of the final value problem $\eqref{riccat1}$ if for each $t \in [0,T]$ and each $h \in H$,\\ $P(t)h=U_A^*(T,t)P(T)U_A(T,t)h-\int_t^TU_A^* (s,t)\left( P(s)B(s)Q^{-1}(s)B^*(s)P(s)-C^*(s)C(s)\right)U_A(s,t)h \,ds$.} $P^{T}(\cdot) \in C\big([0,T];\Sigma^{+}(H)\big)$.
\end{lemma}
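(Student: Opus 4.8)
The plan is to handle the unboundedness of $A(\cdot)$ via Yosida's approximation and to reduce the existence question to the classical theory for bounded generators. Fix $T>0$. For $n$ large, $A_n(t)=n^2(n-A(t))^{-1}-nI$ is bounded, generates the evolution operator $U_{A_n}(\cdot,\cdot)$, and is still $\theta$-periodic. The Riccati equation \eqref{riccat1} with $A$ replaced by $A_n$ is then a genuine ODE in the Banach space $L(H)$ whose right-hand side $P\mapsto -A_n^*P-PA_n+PBQ^{-1}B^*P-C^*C$ is quadratic, hence locally Lipschitz; by the Picard–Lindel\"of theorem it has a unique maximal $C^1$ solution $P_n(\cdot)$ with $P_n(T)=0$. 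Since the equation is invariant under $P\mapsto P^*$, uniqueness forces $P_n(t)=P_n(t)^*$.

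Next I would establish the a priori bound $0\le P_n(t)\le MI$ on the whole of $[0,T]$, with $M$ independent of $n$. For this I use a completion-of-squares argument, which for the $C^1$ solution $P_n$ gives, along any trajectory of $\dot y=A_ny+Bu$ with $y(t)=x$, the identity $\langle P_n(t)x,x\rangle=\int_t^T\big(\|Q^{1/2}(s)u(s)\|^2+\|C(s)y(s)\|^2\big)\,ds-\int_t^T\big\|Q^{1/2}(s)u(s)+Q^{-1/2}(s)B^*(s)P_n(s)y(s)\big\|^2\,ds$; equivalently, $\tfrac12\langle P_n(t)x,x\rangle$ is the value function of the approximate finite-horizon LQ problem. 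Evaluating at the closed-loop control $u=-Q^{-1}B^*P_ny$ yields $P_n(t)\ge0$, and evaluating at $u\equiv0$ yields $\langle P_n(t)x,x\rangle\le\int_t^T\|C(s)U_{A_n}(s,t)x\|^2\,ds\le M\|x\|^2$, where $M$ depends only on $T$, on $\sup_{[0,\theta]}\|C(\cdot)\|$, and on $\sup_n\sup_{0\le s\le t\le T}\|U_{A_n}(t,s)\|$, the latter being finite by the standard uniform bounds satisfied by Yosida approximations of the generator of an evolution system. This a priori bound excludes finite-time blow-up, so each $P_n$ is in fact defined on all of $[0,T]$ with values in $\Sigma^+(H)$.

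The third step is the passage to the limit $n\to\infty$. Writing the mild (integral) form of the approximate equation and using that $U_{A_n}(t,s)x\to U_A(t,s)x$ uniformly on $\{0\le s\le t\le T\}$ for each $x$, together with the $n$-uniform bounds $\|P_n(s)\|\le M$, a Gronwall argument on the integral identity shows that $t\mapsto P_n(t)x$ is Cauchy, uniformly in $t\in[0,T]$, for every $x\in H$; call the limit $P^T(t)x$. The limit operator $P^T(\cdot)$ then lies in $C([0,T];\Sigma^+(H))$, still satisfies $0\le P^T(t)\le MI$, and, passing to the limit inside the integral, solves the mild form of \eqref{riccat1}. (Equivalently, one obtains local existence for \eqref{riccat1} directly by a Banach fixed point on $C([t_0,T];\Sigma^+(H))$ with $t_0$ close to $T$, exploiting that $P\mapsto PBQ^{-1}B^*P$ is Lipschitz on bounded sets, and then globalizes via the same a priori bound; both routes are carried out in \cite{D,PI1}.) Finally, uniqueness: if $P_1,P_2$ are two mild solutions on $[0,T]$, subtract their integral representations and use the factorization $P_1BQ^{-1}B^*P_1-P_2BQ^{-1}B^*P_2=P_1BQ^{-1}B^*(P_1-P_2)+(P_1-P_2)BQ^{-1}B^*P_2$, which together with the boundedness of $P_1,P_2,B,Q^{-1}$ on $[0,T]$ and of $U_A$ yields, by Gronwall's inequality run backward from $t=T$, $\|P_1(t)-P_2(t)\|\equiv0$. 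I expect the main technical obstacle to be the limit passage in the third step: upgrading the pointwise-in-$x$ convergence of $P_n(t)x$ to something strong enough to pass to the limit in the quadratic term $P_nBQ^{-1}B^*P_n$ under the integral, and checking that this convergence is uniform in $t$ so that the limit is continuous. Since Lemma~\ref{ric} is quoted from \cite{D,PI1}, I would rely on those references for the detailed bookkeeping while keeping the structure above.
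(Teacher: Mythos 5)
The paper offers no proof of Lemma~\ref{ric}: it is quoted directly from \cite{D} (Proposition 3.1 and Theorem 3.8) and \cite{PI1} (Proposition 3.4), and your outline --- approximation by bounded generators (or a direct fixed point in the mild integral form), the a priori bound $0\le P_n(t)\le MI$ obtained by identifying $\tfrac12\langle P_n(t)x,x\rangle$ with the value function and testing with the closed-loop and zero controls, strong-limit passage, and Gronwall uniqueness via the bilinear factorization of the difference of the quadratic terms --- is exactly the argument of those references. The one point to watch is that the Picard--Lindel\"of step as you state it needs norm-continuity of $t\mapsto A_n(t)$ in $L(H)$, which the standing hypotheses (strong continuity of the evolution system only) do not supply; the integral-equation fixed point in $C\big([t_0,T];\Sigma(H)\big)$ that you mention parenthetically avoids this and is the formulation the cited proofs actually use.
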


\begin{lemma}\label{ric1}
Assume that $(A(\cdot),B(\cdot))$ is exponentially $\theta$-periodic stabilizable, and that $(A(\cdot),C(\cdot))$ is exponentially $\theta$-periodic detectable. Then Equation $\eqref{riccat2}$ admits a unique $\theta$-periodic mild solution $P_\theta(\cdot) \in C\big([0,\theta];\Sigma^{+}(H)\big)$. Moreover, for each $ t \in \mathbb{R}$ and $ x\in H $, $$ \lim_{T \to +\infty} P^T (t)x=P_\theta (t)x. $$
\end{lemma}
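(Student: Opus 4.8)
The plan is to construct $P_\theta$ as the limit, as $T\to+\infty$, of the finite-horizon solutions $P^T$ of \eqref{riccat1}, to identify this limit as a $\theta$-periodic mild solution of \eqref{riccat2}, and to prove uniqueness via an exponential-stability argument for the closed-loop evolution system attached to any nonnegative $\theta$-periodic solution. First I would record a uniform bound and a monotonicity property. Since $\langle P^T(t)x,x\rangle$ is the value at $(t,x)$ of the homogeneous problem $(LQ)^T$ (i.e. with $y_d\equiv 0$), namely the infimum of $\tfrac12\int_t^T(\|C(s)y(s)\|^2+\|Q^{1/2}(s)u(s)\|^2)\,ds$ over trajectories of \eqref{2.1} with $y(t)=x$, testing with the $\theta$-periodic stabilizing feedback $K_1$ of Definition~\ref{du1206} — that is, $u(s)=K_1(s)y(s)$ along the exponentially decaying trajectory $y(s)=U_{A+BK_1}(s,t)x$ — and using that the $\theta$-periodic data $B,C,Q,K_1$ are bounded, yields a constant $M$ independent of $t\ge 0$ and $T$ with $0\le P^T(t)\le MI$. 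Moreover, since the running cost is nonnegative, restricting to $[t,T_1]$ a control admissible on $[t,T_2]$ only decreases its cost, so $P^{T_1}(t)\le P^{T_2}(t)$ whenever $t\le T_1\le T_2$; a bounded monotone family of self-adjoint operators converges in the strong operator topology, hence $P_\infty(t)x:=\lim_{T\to+\infty}P^T(t)x$ exists for every $x\in H$ and $t\ge 0$, with $P_\infty(t)=P_\infty(t)^*$ and $0\le P_\infty(t)\le MI$.

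Next I would check that $P_\infty$ is a $\theta$-periodic mild solution of \eqref{riccat2}. By $\theta$-periodicity of $A,B,C,Q$, the map $t\mapsto P^{T+\theta}(t+\theta)$ solves \eqref{riccat1} on $[0,T]$ with terminal value $0$, so by uniqueness (Lemma~\ref{ric}) it equals $P^T$; thus $P^T(t+\theta)=P^{T-\theta}(t)$, and letting $T\to+\infty$ gives $P_\infty(t+\theta)=P_\infty(t)$. For the Riccati identity, fix $0\le t<t_1$; inserting the cocycle property $U_A(s,t)=U_A(s,t_1)U_A(t_1,t)$ into the mild formula for $P^T$ on $[0,T]$ (footnote to Lemma~\ref{ric}) gives, for every $h\in H$ and $T>t_1$,
\begin{equation*}
P^T(t)h=U_A^*(t_1,t)P^T(t_1)U_A(t_1,t)h-\int_t^{t_1}U_A^*(s,t)\big(P^TBQ^{-1}B^*P^T-C^*C\big)(s)\,U_A(s,t)h\,ds.
\end{equation*}
Letting $T\to+\infty$: the left-hand side tends to $P_\infty(t)h$; in the first term on the right, $P^T(t_1)U_A(t_1,t)h\to P_\infty(t_1)U_A(t_1,t)h$ and $U_A^*(t_1,t)$ is bounded; in the integrand, since $B,Q^{-1},C$ are bounded and $P^T(s)z_T\to P_\infty(s)z$ whenever $z_T\to z$ (because $\|P^T(s)\|\le M$), the integrand converges pointwise in $s$ to the same expression with $P_\infty$ in place of $P^T$ and is dominated on $[t,t_1]$ by a constant multiple of $\|h\|$; dominated convergence applies. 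Hence $P_\infty$ satisfies the mild Riccati equation on every compact interval, in particular it is a $\theta$-periodic mild solution of \eqref{riccat2}, and the integral formula yields $P_\infty\in C([0,\theta];\Sigma^+(H))$.

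Finally, for uniqueness, the crucial ingredient is that every nonnegative $\theta$-periodic mild solution $P$ of \eqref{riccat2} generates an exponentially stable closed-loop evolution system $U_{A_P}(\cdot,\cdot)$, where $A_P(\cdot):=A(\cdot)-B(\cdot)Q^{-1}(\cdot)B^*(\cdot)P(\cdot)$. Indeed, rewriting \eqref{riccat2} as $\dot P+A_P^*P+PA_P+PBQ^{-1}B^*P+C^*C=0$ gives, along any closed-loop trajectory $\dot z=A_P(t)z$ (made rigorous via the Yosida approximations), $\tfrac{d}{dt}\langle P(t)z(t),z(t)\rangle=-\|Q^{-1/2}(t)B^*(t)P(t)z(t)\|^2-\|C(t)z(t)\|^2\le 0$; integrating, both $C(\cdot)z(\cdot)\in L^2(0,\infty;H)$ and the feedback control $u=-Q^{-1}B^*Pz\in L^2(0,\infty;U)$, so writing $\dot z=(A+K_2C)z-K_2Cz+Bu$ and estimating against the exponentially stable system $U_{A+K_2C}$ (detectability of $(A,C)$) gives $z\in L^2(0,\infty;H)$; a Datko-type argument for the $\theta$-periodic system $U_{A_P}$ then yields exponential stability. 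Granting this, if $P_1,P_2\in C([0,\theta];\Sigma^+(H))$ are two $\theta$-periodic mild solutions, the $\theta$-periodic difference $\Delta:=P_1-P_2$ is a mild solution of $\dot\Delta+A_{P_1}^*\Delta+\Delta A_{P_2}=0$, so $\Delta(t)=U_{A_{P_1}}^*(\tau,t)\Delta(\tau)U_{A_{P_2}}(\tau,t)$ for all $\tau\ge t$; taking $\tau=t+n\theta$ and using $\theta$-periodicity of $\Delta$ and of both evolution systems,
\begin{equation*}
\langle\Delta(t)x,x\rangle=\big\langle\Delta(t)\,U_{A_{P_2}}(t+n\theta,t)x,\;U_{A_{P_1}}(t+n\theta,t)x\big\rangle\longrightarrow 0\quad\text{as }n\to+\infty
\end{equation*}
by exponential stability, whence $\Delta\equiv 0$. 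Consequently \eqref{riccat2} has a unique $\theta$-periodic mild solution, necessarily the $P_\infty$ above; denoting it $P_\theta$, the monotone convergence gives $P^T(t)x\to P_\theta(t)x$ as $T\to+\infty$ for every $x\in H$ and $t\ge 0$ (hence for all $t\in\mathbb{R}$ after $\theta$-periodic extension).

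The part I expect to be the main obstacle is the exponential stability of $U_{A_P}$ used in the uniqueness step: the bound and the periodic identification of $P_\infty$ use only the stabilizability of $(A,B)$, whereas it is precisely here that detectability of $(A,C)$ enters, and it rests on the somewhat delicate evolution-system and Datko-type estimates found in \cite{D, PI1}. A secondary, pervasive care point is that $P^T\to P_\infty$ holds only in the strong operator topology, so every limit passage — and the regularity $P_\infty\in C([0,\theta];\Sigma^+(H))$ — must be handled through pointwise convergence together with the uniform bound $\|P^T(t)\|\le M$, not norm convergence.
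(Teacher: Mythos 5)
The paper does not prove this lemma itself --- it only cites \cite{D} and \cite{PI1} --- and your argument is precisely the standard one from those references: monotone, uniformly bounded convergence of $P^T$ under stabilizability, identification of the strong limit as a $\theta$-periodic mild solution via the cocycle form of the mild Riccati identity, and uniqueness via the detectability-induced exponential stability of the closed loop combined with the representation $\Delta(t)=U_{A_{P_1}}^*(\tau,t)\Delta(\tau)U_{A_{P_2}}(\tau,t)$. Your proof is correct, and you rightly flag the only delicate points (the Datko/detectability step and the need to carry out every limit passage in the strong operator topology using the uniform bound $\|P^T(t)\|\leqslant M$).
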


We next define the following function
\begin{equation*}
v^T(t,x)=\frac{1}{2}\langle P^T(t) x, x\rangle+\langle r^T(t), x\rangle+s^T(t),\;\;(t, x) \in[0, T] \times H,
\end{equation*}
where $P^T(\cdot) \in C\big([0,T];\Sigma^{+}(H)\big)$ is the mild solution of the differential Riccati equation \eqref{riccat1}
and $r^T(\cdot) \in C([0,T];H)$ is the solution of
\begin{equation*}
\left\{
\begin{split}
	&\dot{r}(t)=-\left( A(t)-B(t)Q^{-1}(t)B^*(t)P^T(t)\right)^* r(t)+C^*(t)C(t)y_d(t),\\
	&r(T)=0,
\end{split}
\right.
\end{equation*}
and for each $t \in [0,T]$, $s^T(t)$ is given by
\begin{equation*}
s^T(t)=-\frac{1}{2}\int_t^T\left[\left\|Q^{-1/2}(s) B^*(s) r^T(s)\right\|^2-\left\|C(s)y_d(s)\right\|^2\right] ds.
\end{equation*}

Then, one can easily check that $v^T$ is the value function of the problem $(LQ)^T$ (see \cite[Part III]{TE} for more details).
Furthermore, the optimal control of the problem $(LQ)^T$ is given by (see, e.g., \cite[Chapter 6, Theorem 5.5]{LY}) 
\begin{equation}\label{Toc}
u^T(t)=-Q^{-1}(t)B^*(t)\left( P^T(t)y^T(t)+r^T(t)\right) , \;\;\text{ for a.e.}\;\;t\in[0,T],
\end{equation}
the optimal trajectory $y^T(\cdot) \in C\big([0,T],H\big)$ is the solution of
\begin{equation}\label{y_T_equ}
\dot{y}^T(t) = \big(A(t)-B(t)Q^{-1}(t)B^*(t)P^T(t)\big)y^T(t)-B(t)Q^{-1}(t)B^*(t)r^T(t),
\end{equation}
with the initial condition
\begin{equation*}
y^T(0)=y_0 ,
\end{equation*}
and the optimal adjoint state $\lambda^T(\cdot)\in C([0,T];H)$ is given by
\begin{equation}\label{adT}
\lambda^T(t)=P^T(t)y^T(t)+r^T(t),\;\;\text{ for a.e.}\;\;t\in[0,T].
\end{equation}

Similarly, assuming  that $(A(\cdot),B(\cdot))$ is exponentially $\theta$-periodic stabilizable, and that $(A(\cdot),C(\cdot))$ is exponentially $\theta$-periodic detectable, the value function $v^\theta:[0,\theta]\times H \rightarrow \mathbb R$ corresponding to the problem (Per)$_\theta$ is given by (see, e.g., \cite[Proposition 4.1]{D})
\begin{equation*}
v^\theta(t,x)=\frac{1}{2}\langle P_\theta(t) x, x\rangle+\langle r_\theta(t), x\rangle+s_\theta(t),\;\;(t, x) \in[0, \theta] \times H, 
\end{equation*}the optimal control is given by (see, e.g., \cite[Theorem 4.3]{D} or \cite[Chapter 6, Theorem 5.5]{LY}) 
\begin{equation}\label{poc}
u_\theta(t)=-Q^{-1}(t)B^*(t)\left( P_\theta(t)y_\theta(t)+r_\theta(t)\right) , \;\;\text{ for a.e.}\;\;t\in[0,\theta],
\end{equation}
the optimal trajectory $y_\theta(\cdot)\in C([0,\theta];H)$ is the unique mild solution of
\begin{equation}\label{2.12}
\dot{y}_\theta(t) = \big(A(t)-B(t)Q^{-1}(t)B^*(t)P_\theta(t)\big)y_\theta(t)-B(t)Q^{-1}(t)B^*(t)r_\theta(t),
\end{equation}
with a periodic condition
\begin{equation*}
y_\theta(0)= y_\theta(\theta),
\end{equation*}
and the optimal adjoint state $\lambda_\theta(\cdot)\in C([0,\theta];H)$ is given by 
\begin{equation}\label{adp}
\lambda_\theta(t)=P_\theta(t)y_\theta(t)+r_\theta(t),\;\;\text{ for a.e.}\;\;t\in[0,\theta].
\end{equation}
Here $P_\theta(\cdot) \in C\big([0,\theta];\Sigma^{+}(H)\big)$ is the unique mild solution of the differential periodic Riccati equation \eqref{riccat2}
and $r_\theta(\cdot) \in C([0,\theta];H)$ is the unique periodic solution of
\begin{equation}\label{r_theta}
\left\{
\begin{split}
	&\dot{r}(t)=-\left( A(t)-B(t)Q^{-1}(t)B^*(t)P_\theta(t)\right)^* r(t)+C^*(t)C(t)y_d(t),\;\;t\in (0, \theta),\\
	&r(\theta)=r(0),
\end{split}
\right.
\end{equation}
and $s_\theta(\cdot)$ is given by
\begin{equation*}
s_\theta(t)=-\frac{1}{2}\int_t^\theta\left[\left\|Q^{-1/2}(s) B^*(s) r_\theta(s)\right\|^2-\left\|C(s)y_d(s)\right\|^2\right] ds,\;\;t\in [0, \theta].
\end{equation*}

\subsection{Properties of the evolution operator}
 We next give two lemmas on the evolution operator. Lemma \ref{expsta} is on the exponential stability of the periodic evolution system, see the proof, for instance, in \cite[Theorem 1]{DR}; while Lemma \ref{varphi} gives the representation of the solution to the adjoint equation, and the proof is standard by a duality argument, hence we omit it. We use them to get the exponential stabilizability of the evolution operator.
\begin{lemma}\label{expsta}
	Equation $\eqref{2.1}$ with the null control is exponentially stable if and only
	if for each $h \in H$, there exists a finite constant $C(h)$, depending only on $h$, such that for all $t_0\geqslant0$, 
	\begin{equation}
		\int_{t_0}^{+\infty} \|U_A(t,t_0)h\|^2dt \leqslant C(h).
	\end{equation}
\end{lemma}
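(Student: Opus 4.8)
The statement is a Datko-type characterization of uniform exponential stability, and the plan is to follow the classical Datko argument for evolution families, with the $\theta$-periodicity of $U_A(\cdot,\cdot)$ supplying the uniformity in the initial time $t_0$ that the two-parameter setting requires. The implication from exponential stability to the integral bound is immediate: if $\|U_A(t,s)\|\le Me^{-\omega(t-s)}$ for all $t\ge s\ge 0$, then $\int_{t_0}^{+\infty}\|U_A(t,t_0)h\|^2\,dt\le \frac{M^2}{2\omega}\|h\|^2$, which serves as $C(h)$ and is independent of $t_0$. So the content is entirely in the converse.

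For the converse, I would first record, using the strong continuity of $U_A(\cdot,\cdot)$ together with the periodicity relation $U_A(t+\theta,s+\theta)=U_A(t,s)$, the short-time bound $M_0:=\sup\{\|U_A(t,s)\|:0\le s\le t\le s+\theta\}<+\infty$: periodicity reduces this supremum to one of $\|U_A(t,s)\|$ over a compact set of parameters, on which it is finite by the uniform boundedness principle applied pointwise in $H$. Next, for each fixed $t_0\ge 0$ the linear map $\Phi_{t_0}\colon H\to L^2(t_0,+\infty;H)$, $(\Phi_{t_0}h)(t)=U_A(t,t_0)h$, is everywhere defined by hypothesis and has closed graph — if $h_n\to h$ in $H$ and $\Phi_{t_0}h_n\to g$ in $L^2$, then along a subsequence $g(t)=\lim_n U_A(t,t_0)h_n=U_A(t,t_0)h$ for a.e.\ $t$ since $U_A(t,t_0)\in L(H)$ — hence $\Phi_{t_0}$ is bounded: $\int_{t_0}^{+\infty}\|U_A(t,t_0)h\|^2\,dt\le\kappa(t_0)^2\|h\|^2$. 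Extending these operators by zero on $[0,t_0)$ so as to regard them all as maps into $L^2(0,+\infty;H)$, the family $\{\Phi_{t_0}\}_{t_0\ge 0}$ is pointwise bounded (by $\sqrt{C(h)}$), so the uniform boundedness principle yields $\kappa^\ast:=\sup_{t_0\ge 0}\kappa(t_0)<+\infty$, i.e.\ $\int_{t_0}^{+\infty}\|U_A(t,t_0)h\|^2\,dt\le(\kappa^\ast)^2\|h\|^2$ for all $t_0\ge 0$.

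With this uniform estimate in hand, the Datko bootstrap proceeds in two strokes. First, for $t\ge t_0+\theta$ and $s\in[t-\theta,t]$ write $U_A(t,t_0)h=U_A(t,s)U_A(s,t_0)h$, bound the first factor by $M_0$, then square and integrate over $s$ to obtain $\theta\|U_A(t,t_0)h\|^2\le M_0^2(\kappa^\ast)^2\|h\|^2$; combined with $\|U_A(t,t_0)\|\le M_0$ for $t\le t_0+\theta$, this gives a uniform operator bound $M_1:=\sup\{\|U_A(t,t_0)\|:t\ge t_0\ge 0\}<+\infty$. Second, $(t-t_0)\|U_A(t,t_0)h\|^2=\int_{t_0}^t\|U_A(t,s)U_A(s,t_0)h\|^2\,ds\le M_1^2(\kappa^\ast)^2\|h\|^2$, hence $\|U_A(t,t_0)\|\le M_1\kappa^\ast/\sqrt{t-t_0}\to 0$ as $t-t_0\to+\infty$, uniformly in $t_0$. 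Choosing $\delta>0$ with $\|U_A(t_0+\delta,t_0)\|\le\frac12$ for every $t_0\ge 0$ and iterating the cocycle identity $U_A(t,r)U_A(r,s)=U_A(t,s)$ over time steps of length $\delta$ (with a single leftover factor of norm $\le M_1$) produces $\|U_A(t,t_0)\|\le 2M_1 e^{-\omega(t-t_0)}$ with $\omega=(\ln 2)/\delta$, which is precisely uniform exponential stability of $\eqref{2.1}$ with the null control.

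I expect the main obstacle to be the passage from mere integrability of $t\mapsto\|U_A(t,t_0)h\|^2$ to the uniform operator-norm bound $M_1$: this is exactly the point at which the short-time bound $M_0$ — and hence the $\theta$-periodicity of the evolution operator — is indispensable, and the same periodicity, through the zero-extension device and the uniform boundedness principle, is what keeps all the constants independent of the initial time $t_0$ (which is what makes the conclusion a genuine \emph{uniform} exponential stability statement rather than a pointwise-in-$t_0$ one).
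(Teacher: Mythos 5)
Your proof is correct. The paper does not prove Lemma~\ref{expsta} itself but simply cites Datko's theorem \cite[Theorem 1]{DR}, and your argument is exactly the standard proof of that result --- forward direction by direct integration, converse via closed graph plus uniform boundedness to get an $L^2$ estimate uniform in $t_0$, then the two-stroke Datko bootstrap to operator-norm decay and iteration of the cocycle identity --- with the only setting-specific addition being your use of $\theta$-periodicity and strong continuity to supply the local bound $M_0$ that Datko's theorem takes as a hypothesis.
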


\begin{lemma}\label{varphi}
	If $\varphi(\cdot)$ is a mild solution (in backward time) for the adjoint equation
	\begin{equation}\label{varphieq}
		\left\{
		\begin{split}
			&\dot{\varphi}(t)=-A^*(t) \varphi(t) + g(t),\;\;t\in (0, T),\\
			&\varphi(T)=\varphi_T \in H,
		\end{split}
		\right.
	\end{equation}
	where $ g \in L^2 (0,T;H) $, then it can be expressed as 
	\begin{equation*}
		\varphi(t)=U^*_A(T,t)\varphi_T -\int_{t}^{T}U^*_A(\tau,t)g(\tau)d\tau , \;\; \forall t \in[0,T].
	\end{equation*}
\end{lemma}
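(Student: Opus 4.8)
The plan is to prove the representation by a duality argument, pairing the backward adjoint solution $\varphi(\cdot)$ against a suitable forward trajectory driven by $A(\cdot)$. I would fix $t\in[0,T)$ and an arbitrary $x_0\in H$, and set $x(s)=U_A(s,t)x_0$ for $s\in[t,T]$; by the defining properties of the evolution operator, this $x(\cdot)$ is the mild solution of $\dot x(s)=A(s)x(s)$ with $x(t)=x_0$, and $x(T)=U_A(T,t)x_0$. The mechanism I want to exploit is that the contributions of $A(\cdot)$ and $A^*(\cdot)$ cancel in the time derivative of the scalar pairing $s\mapsto\langle\varphi(s),x(s)\rangle$, leaving only the inhomogeneity $g$.

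Formally, differentiating and using \eqref{varphieq} together with the forward equation for $x$ would give
\[
\frac{d}{ds}\langle\varphi(s),x(s)\rangle=\langle-A^*(s)\varphi(s)+g(s),x(s)\rangle+\langle\varphi(s),A(s)x(s)\rangle=\langle g(s),x(s)\rangle,
\]
since $\langle A^*(s)\varphi(s),x(s)\rangle=\langle\varphi(s),A(s)x(s)\rangle$. Integrating over $[t,T]$ and inserting the terminal datum $\varphi(T)=\varphi_T$, the initial datum $x(t)=x_0$, and $x(T)=U_A(T,t)x_0$ yields
\[
\langle\varphi(t),x_0\rangle=\langle\varphi_T,U_A(T,t)x_0\rangle-\int_t^T\langle g(\tau),U_A(\tau,t)x_0\rangle\,d\tau.
\]
Moving the operators onto the first slot of each pairing by means of the adjoints $U_A^*(T,t)$ and $U_A^*(\tau,t)$, and using that $\tau\mapsto U_A^*(\tau,t)g(\tau)$ is Bochner integrable (by strong continuity of $U_A^*(\cdot,t)$ and $g\in L^2(0,T;H)$) to pull the integral out of the inner product, I would obtain $\langle\varphi(t),x_0\rangle=\big\langle U_A^*(T,t)\varphi_T-\int_t^TU_A^*(\tau,t)g(\tau)\,d\tau,\;x_0\big\rangle$. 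Since $x_0\in H$ is arbitrary, the announced identity follows for every $t\in[0,T]$, the case $t=T$ being immediate from $U_A^*(T,T)=I$.

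The main obstacle is that the formal computation is not licit for a mild solution: in general $\varphi(s)\notin D(A^*(s))$ and $x(s)\notin D(A(s))$, so neither the derivative of the pairing nor the adjoint relation $\langle A^*\varphi,x\rangle=\langle\varphi,Ax\rangle$ is literally meaningful. The standard way to make this rigorous is to regularize through the Yosida approximation $A_n(t)=n^2(n-A(t))^{-1}-nI$ supplied in the hypotheses: the operators $A_n(t)$ are bounded, so the corresponding trajectories are classical, the product rule and the identity $\langle A_n^*u,v\rangle=\langle u,A_nv\rangle$ hold verbatim, and the integrated relation above is exact at the level $n$. One then passes to the limit $n\to\infty$ using the convergence $U_{A_n}(\tau,t)x_0\to U_A(\tau,t)x_0$ (hence $U_{A_n}^*(\tau,t)\to U_A^*(\tau,t)$ strongly) postulated in Section~\ref{free}, extended from $[0,\theta]$ to $\{0\le s\le\tau\le T\}$ by the cocycle and $\theta$-periodicity properties of $U_A$, together with dominated convergence for the $\tau$-integral, to recover the displayed identities for the original operator $A(\cdot)$. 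An equivalent route, which I would mention as an alternative, is to first establish the formula for smooth data $\varphi_T\in D(A^*(t))$ and $g\in C^1$, where the computation is classical, and then extend to general $\varphi_T\in H$ and $g\in L^2(0,T;H)$ by density, using the continuity of both sides of the representation with respect to these data.
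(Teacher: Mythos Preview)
Your proposal is correct and follows exactly the approach the paper indicates: the paper omits the proof entirely, noting only that it ``is standard by a duality argument,'' which is precisely what you carry out by pairing $\varphi$ against the forward trajectory $s\mapsto U_A(s,t)x_0$ and regularizing via the Yosida approximation. One small caution: strong convergence of $U_{A_n}(\tau,t)$ gives only weak convergence of $U_{A_n}^*(\tau,t)$ in general, but since your argument lives entirely at the level of scalar pairings $\langle\cdot,x_0\rangle$, weak convergence is all you actually use.
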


\medskip

\subsection{Exponential convergence estimate}
\begin{lemma}\label{du120505}
Under the assumptions of  Lemma \ref{ric1}, the evolution operator $U_\theta(\cdot,\cdot)$ generated by $F_\theta(\cdot)=A(\cdot)-B(\cdot)Q^{-1}(\cdot)B^*(\cdot)P_\theta(\cdot)$ with $P_\theta(\cdot)$ being the $\theta$-periodic solution of $\eqref{riccat2}$ is exponentially stable, i.e., there exist two positive constants $M, \rho$ such that
\begin{equation}\label{prop1}
	\|U_\theta(t,s)\| \leqslant Me^{-\rho (t-s)} ,\; \; \forall \; 0\leqslant s \leqslant t.
\end{equation}
Moreover, the unique $\theta$-periodic solution $r_\theta(\cdot)\in C\big([0,\theta];H\big)$ of $\eqref{r_theta}$ is given by 
\begin{equation*}
	r_\theta(t)= -\int_{t}^{+\infty}U^*_\theta(\tau,t)C^*(\tau)C(\tau)y_d(\tau) d\tau,
\end{equation*}
and the optimal periodic trajectory of the problem (Per)$_\theta$ is given by
\begin{equation*}
	y_\theta(t)=-U_\theta(t,0)\int_{-\infty}^0 U_\theta(0,s) B(s)Q^{-1}(s)B^*(s)r_\theta(s)  ds-\int_0^t U_\theta(t,s) B(s)Q^{-1}(s)B^*(s)r_\theta(s)ds.
\end{equation*}
\end{lemma}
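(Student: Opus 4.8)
The plan is to establish the three assertions in order, since each builds on the previous. First, for the exponential stability of $U_\theta(\cdot,\cdot)$: the key observation is that $P_\theta(\cdot)$ serves as a Lyapunov function for the closed-loop system. Differentiating $t\mapsto\langle P_\theta(t)U_\theta(t,s)x,U_\theta(t,s)x\rangle$ along trajectories and using the periodic Riccati equation \eqref{riccat2}, I expect the identity
\begin{equation*}
\frac{d}{dt}\langle P_\theta(t)U_\theta(t,s)x,U_\theta(t,s)x\rangle=-\big\|C(t)U_\theta(t,s)x\big\|^2-\big\|Q^{-1/2}(t)B^*(t)P_\theta(t)U_\theta(t,s)x\big\|^2,
\end{equation*}
so integrating gives a uniform bound $\int_s^{+\infty}\big(\|C(t)U_\theta(t,s)x\|^2+\|Q^{-1/2}(t)B^*(t)P_\theta(t)U_\theta(t,s)x\|^2\big)\,dt\leqslant\langle P_\theta(s)x,x\rangle\leqslant c\|x\|^2$. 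To upgrade square-integrability of the "observed" quantity to square-integrability of the whole state, I would use the exponential $\theta$-periodic detectability of $(A(\cdot),C(\cdot))$ together with the fact that $F_\theta$ differs from $A$ by the bounded feedback $-BQ^{-1}B^*P_\theta$; a perturbation/variation-of-constants argument, absorbing the $Q^{-1/2}B^*P_\theta$ term, yields $\int_s^{+\infty}\|U_\theta(t,s)x\|^2\,dt\leqslant C(x)$, and then Lemma \ref{expsta} (Datko-type) gives \eqref{prop1}. Rigorous justification requires working with the Yosida approximations $A_n$ and passing to the limit, as in Da Prato–Ichikawa; I would invoke that machinery rather than redo it.

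Second, the formula for $r_\theta(\cdot)$: equation \eqref{r_theta} is a backward linear equation with generator $-F_\theta^*(\cdot)$ and source $C^*C y_d$, so by Lemma \ref{varphi} its solution on any $[t,T]$ is $r_\theta(t)=U_\theta^*(T,t)r_\theta(T)-\int_t^T U_\theta^*(\tau,t)C^*(\tau)C(\tau)y_d(\tau)\,d\tau$. Now I use the exponential stability \eqref{prop1} of $U_\theta$ — hence of $U_\theta^*$ — and the boundedness of $C^*Cy_d$ (which follows from periodicity and continuity) to see that the integral converges absolutely as $T\to+\infty$ and $U_\theta^*(T,t)r_\theta(T)\to0$ (using that $r_\theta$, being continuous and periodic, is bounded). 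This forces $r_\theta(t)=-\int_t^{+\infty}U_\theta^*(\tau,t)C^*(\tau)C(\tau)y_d(\tau)\,d\tau$; one checks this expression is indeed $\theta$-periodic (change of variables plus $\theta$-periodicity of $U_\theta$ and of $C,y_d$) and solves \eqref{r_theta}, which by uniqueness identifies it with $r_\theta$.

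Third, the formula for $y_\theta(\cdot)$: the trajectory solves \eqref{2.12}, a forward linear equation with generator $F_\theta$ and source $-BQ^{-1}B^*r_\theta$, subject to the periodicity constraint $y_\theta(0)=y_\theta(\theta)$. By variation of constants, $y_\theta(t)=U_\theta(t,0)y_\theta(0)-\int_0^t U_\theta(t,s)B(s)Q^{-1}(s)B^*(s)r_\theta(s)\,ds$. Imposing $y_\theta(\theta)=y_\theta(0)$ and solving for $y_\theta(0)$ — which is legitimate because $1\in\rho(U_\theta(\theta,0))$ thanks to exponential stability — gives $y_\theta(0)=-[I-U_\theta(\theta,0)]^{-1}\int_0^\theta U_\theta(\theta,s)B(s)Q^{-1}(s)B^*(s)r_\theta(s)\,ds$. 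Then I would recognize the Neumann-series identity $[I-U_\theta(\theta,0)]^{-1}U_\theta(\theta,0)^k$ summed against the periodic integrand as precisely $-\int_{-\infty}^0 U_\theta(0,s)B(s)Q^{-1}(s)B^*(s)r_\theta(s)\,ds$, using $\theta$-periodicity of all coefficients to tile $(-\infty,0]$ by intervals of length $\theta$; substituting back yields the claimed expression.

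The main obstacle is the first step, specifically the upgrade from the "detected-output" integral estimate to full-state exponential decay in the infinite-dimensional, unbounded-$A$, time-periodic setting. The bookkeeping with Yosida approximations and the need to combine the Riccati Lyapunov identity with periodic detectability (rather than the cleaner time-invariant algebraic case) is where the real work lies; the second and third steps are then essentially formal manipulations with the variation-of-constants formula once \eqref{prop1} is in hand.
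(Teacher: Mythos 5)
Your proposal is correct and follows essentially the same route as the paper: an $L^2$-in-time bound on $U_\theta(\cdot,s)x$ upgraded to exponential stability via the Datko-type Lemma~\ref{expsta}, followed by variation-of-constants representations of $r_\theta$ and $y_\theta$ that exploit this stability. The only difference is that the paper delegates the key steps to citations (the $L^2$ bound to \cite[Lemma 3.5]{D}, the $r_\theta$ formula to \cite[Proposition 1]{AI}, and the $y_\theta$ formula to \cite[Proposition 2.1]{PI}), whereas you re-derive them (Riccati--Lyapunov identity plus detectability, limit $T\to+\infty$ in Lemma~\ref{varphi}, and the Neumann-series tiling of $(-\infty,0]$), all of which check out.
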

\begin{proof}
By \cite[Lemma 3.5]{D}, there exists a positive constant $c_1$ so that 
\begin{equation}\label{du12051}
	\int_{s}^{+\infty} \|U_\theta(t,s)h\|^2dt \leqslant c_1\|h\|^2,\;\; \forall s \geqslant 0 \;\;and\;\; \forall h \in H.
\end{equation}
The exponential stability property \eqref{prop1} of $U_\theta(\cdot,\cdot)$ then follows from Lemma \ref{expsta}.

We next claim that, for each $T_0\geqslant0$, any solution $z(\cdot) \in C\big([0,T_0];H\big)$ of \begin{equation}\label{adj}
	\left\{\begin{array}{l}
		\dot{z}(t)=-\left(A(t)-B(t)Q^{-1}(t)B^*(t)P_\theta(t)\right)^* z(t), \;\; 0\leqslant t\leqslant T_0, \\
		z(T_0) \in H,
	\end{array}\right.
\end{equation}
such that
\begin{equation}\label{adjexp}
	\|z(t)\| \leqslant Me^{-\rho(T_0-t)}\|z(T_0)\|, \;\;t \in [0,T_0].
\end{equation}
Indeed, by Lemma \ref{varphi}, the solution of $\eqref{adj}$ is
\begin{equation}
	z(s)=U^*_\theta(T_0,s)z(T_0),\;\; \forall s \in[0,T_0].
\end{equation} 
This and the  exponential stability of $U_\theta(\cdot,\cdot)$ imply $\eqref{adjexp}$. According to \cite[Proposition 1]{AI}, the solution of $\eqref{r_theta}$ is
\begin{equation*}
	r_\theta(t)= -\int_{t}^{+\infty}U^*_\theta(\tau,t)C^*(\tau)C(\tau)y_d(\tau) d\tau.
\end{equation*}
Therefore, by the exponential stability of $U_\theta(\cdot,\cdot)$ and the periodicity of $B(\cdot)$, $Q(\cdot)$, $C(\cdot)$ and $r_\theta(\cdot)$, and according to \cite[Proposition 2.1]{PI}, the unique periodic solution of $\eqref{2.12}$ is
\begin{equation*}
	\begin{aligned}
		y_\theta(t)&=U_\theta(t,0)y_\theta (0)-\int_0^t U_\theta(t,s) B(s)Q^{-1}(s)B^*(s)r_\theta(s)ds\\&=-U_\theta(t,0)\int_{-\infty}^0 U_\theta(0,s) B(s)Q^{-1}(s)B^*(s)r_\theta(s)  ds-\int_0^t U_\theta(t,s) B(s)Q^{-1}(s)B^*(s)r_\theta(s)ds.	
	\end{aligned}
\end{equation*}
\end{proof}

\medskip

We next establish an exponential estimate between  $P^T(\cdot)$ and $P_\theta(\cdot)$ when $ T $ is large enough. 
\begin{proposition}\label{dudic}
Under the assumptions of  Lemma \ref{ric1}, there exist two positive constants $M, \mu$ such that
\begin{equation}\label{prop2}
	\|P_\theta(t)-P^T(t)\| \leqslant Me^{-\mu (T-t)}, \; \; 0 \leqslant t \leqslant T,
\end{equation}
where $P_\theta(\cdot)$ is the $\theta$-periodic solution of $\eqref{riccat2}$, and $P^T(\cdot)$ is the solution of $\eqref{riccat1}$.
\end{proposition}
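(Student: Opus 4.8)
\emph{Proposed approach.} The plan is to represent the difference $\Delta(\cdot):=P_\theta(\cdot)-P^T(\cdot)$ by means of two evolution operators and then to invoke the exponential stability of one of them. Here $P_\theta$ is extended $\theta$-periodically, so that it solves the Riccati differential equation in \eqref{riccat2} on all of $[0,T]$. Subtracting \eqref{riccat1} from \eqref{riccat2} and writing the difference of the quadratic terms as $P_\theta BQ^{-1}B^*\Delta+\Delta BQ^{-1}B^*P^T$, one finds that $\Delta$ solves, in the mild sense,
\begin{equation*}
\dot\Delta(t)+F_\theta^*(t)\Delta(t)+\Delta(t)F^T(t)=0,\qquad \Delta(T)=P_\theta(T),
\end{equation*}
where $F_\theta(\cdot)=A(\cdot)-B(\cdot)Q^{-1}(\cdot)B^*(\cdot)P_\theta(\cdot)$ is as in Lemma~\ref{du120505} and $F^T(\cdot):=A(\cdot)-B(\cdot)Q^{-1}(\cdot)B^*(\cdot)P^T(\cdot)$ is the closed-loop operator appearing in \eqref{y_T_equ}. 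Writing $U_\theta(\cdot,\cdot)$ and $U^T(\cdot,\cdot)$ for the evolution operators they respectively generate, the unique mild solution is $\Delta(t)=U_\theta^*(T,t)\,P_\theta(T)\,U^T(T,t)$ for $0\le t\le T$ (checked directly, or by Yosida approximation of $A(\cdot)$). Hence
\begin{equation*}
\|P_\theta(t)-P^T(t)\|\le \|U_\theta(T,t)\|\;\|P_\theta(T)\|\;\|U^T(T,t)\|,
\end{equation*}
and, since $\|U_\theta(T,t)\|\le Me^{-\rho(T-t)}$ by Lemma~\ref{du120505} and $\|P_\theta(T)\|\le\sup_{s\in[0,\theta]}\|P_\theta(s)\|=:c_0$ by periodicity, the whole statement reduces to bounding $\|U^T(T,t)\|$ \emph{uniformly in $T$}; this then yields \eqref{prop2} with $\mu=\rho$, consistently with the Remark after Theorem~\ref{periodictarget}.

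To control the finite-horizon Riccati closed loop $U^T$ uniformly in $T$, I would argue as follows. By the standard correspondence between finite-horizon LQ problems and Riccati equations, $\tfrac12\langle P^T(t)x,x\rangle$ and $\tfrac12\langle P_\theta(t)x,x\rangle$ are the optimal values of the homogeneous ($y_d\equiv0$) problems on $[t,T]$ with initial state $x$ and terminal costs $0$ and $\tfrac12\langle P_\theta(T)\cdot,\cdot\rangle$, respectively; comparing admissible controls then gives $0\le P^T(t)\le P_\theta(t)$, whence $\|P^T(t)\|\le c_0$ for all $t\in[0,T]$ and all $T$. Moreover, the $P^T$-optimal trajectory $y^*(\cdot)=U^T(\cdot,t)x$ with control $u^*=-Q^{-1}B^*P^Ty^*$ satisfies $\int_t^T\big(\|Cy^*\|^2+\|Q^{1/2}u^*\|^2\big)\,ds=\langle P^T(t)x,x\rangle\le c_0\|x\|^2$, so (using $Q\ge\varepsilon I$) $\|Cy^*\|_{L^2(t,T;H)}$ and $\|u^*\|_{L^2(t,T;U)}$ are $\lesssim\|x\|$ uniformly in $T$. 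Now let $K_2$ be a $\theta$-periodic feedback as in Definition~\ref{du1206}, so that $A(\cdot)+K_2(\cdot)C(\cdot)$ generates an exponentially stable evolution; writing $F^T=(A+K_2C)-K_2C-BQ^{-1}B^*P^T$ and noting $-BQ^{-1}B^*P^Ty^*=Bu^*$, the variation-of-constants formula gives
\begin{equation*}
y^*(\tau)=U_{A+K_2C}(\tau,t)x+\int_t^\tau U_{A+K_2C}(\tau,\sigma)\big(B(\sigma)u^*(\sigma)-K_2(\sigma)C(\sigma)y^*(\sigma)\big)\,d\sigma .
\end{equation*}
Since the integrand lies in $L^2(t,T;H)$ with norm $\lesssim\|x\|$ and $U_{A+K_2C}$ decays exponentially, the Cauchy--Schwarz inequality bounds the integral term uniformly, so $\sup_{\tau\in[t,T]}\|y^*(\tau)\|\le c''\|x\|$, that is, $\|U^T(\tau,t)\|\le c''$ uniformly in $T$. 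Combining this with the first paragraph gives $\|P_\theta(t)-P^T(t)\|\le c_0c''Me^{-\rho(T-t)}$, which is \eqref{prop2}.

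I expect the uniform-in-$T$ control of $U^T$ to be the main obstacle. A naive perturbation estimate based on $F^T=F_\theta+BQ^{-1}B^*(P_\theta-P^T)$ only yields, via Gronwall, a bound of the form $\|U^T(\tau,t)\|\le Me^{(M\|BQ^{-1}B^*\|_\infty\sup\|P_\theta-P^T\|-\rho)(\tau-t)}$, which need not remain bounded; one must really exploit the optimality of $U^T$ together with the detectability feedback, as above --- or else cite the classical uniform exponential stability of the finite-horizon Riccati closed loop under exponential stabilizability and detectability. An alternative that avoids the Sylvester identity: comparing admissible controls directly yields $0\le\langle(P_\theta(t)-P^T(t))x,x\rangle\le c_0\|U^T(T,t)x\|^2$, and then a Datko-type iteration upgrades the uniform bound on $U^T$ to uniform exponential stability $\|U^T(\tau,t)\|\le M_3e^{-\gamma(\tau-t)}$, which again closes the argument.
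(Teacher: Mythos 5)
Your proof is correct, but it takes a genuinely different route from the paper's. The paper keeps the \emph{quadratic} Riccati equation satisfied by the difference $R=P^T-P_\theta$, namely $\dot R+F_\theta^*R+RF_\theta-RBQ^{-1}B^*R=0$ with $R(T)=-P_\theta(T)$, and exploits the monotonicity of $t\mapsto\langle y(t),R(t)y(t)\rangle$ along trajectories $y(t)=U_\theta(t,\tau)h$ of the $P_\theta$-closed loop (inequality \eqref{dh1}); this invokes only the exponential stability of $U_\theta$ from Lemma~\ref{du120505} and is aimed at the rate $\mu=2\rho$. You instead linearize completely: $\Delta=P_\theta-P^T$ solves a Sylvester-type equation $\dot\Delta+F_\theta^*\Delta+\Delta F^T=0$, whence the factorization $\Delta(t)=U_\theta^*(T,t)\,P_\theta(T)\,U^T(T,t)$, and the burden shifts to a bound on $U^T$ that is uniform in $T$. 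Your derivation of that bound --- $0\le P^T\le P_\theta$ by comparison of value functions, the energy identity $\int_t^T(\|Cy^*\|^2+\|Q^{1/2}u^*\|^2)\,ds=\langle P^T(t)x,x\rangle$, and the detectability feedback $K_2$ fed into the variation-of-constants formula --- is the classical Da Prato--Ichikawa argument and is sound; it is also not circular, since you do not use Proposition~\ref{prop3.3} (which the paper deduces \emph{from} Proposition~\ref{dudic}). The trade-off: the paper's route is shorter and targets the sharper exponent $2\rho$, while yours costs the extra uniform estimate on $U^T$, but that estimate is independently useful and your exponent $\rho$ amply suffices for Theorem~\ref{periodictarget}. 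One further point in your favour: along the $U_\theta$-flow one has $\frac{d}{dt}\langle y,Ry\rangle=+\|Q^{-1/2}B^*Ry\|^2$, so \eqref{dh1} bounds the nonnegative quantity $\langle h,(P_\theta-P^T)(\tau)h\rangle$ from \emph{below} by $\langle U_\theta(T,\tau)h,P_\theta(T)U_\theta(T,\tau)h\rangle$, and passing to absolute values reverses the inequality; the corresponding \emph{upper} bound arises from the same computation along the $U^T$-flow (where the derivative is $-\|Q^{-1/2}B^*Ry\|^2$), giving $\langle h,(P_\theta-P^T)(\tau)h\rangle\le\langle U^T(T,\tau)h,P_\theta(T)U^T(T,\tau)h\rangle$ --- which again requires exactly the uniform control of $U^T$ that you establish. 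So your insistence on estimating $U^T$ is not a detour but the substantive step, and the ``alternative'' you sketch at the end is in fact the cleanest way to close the argument.
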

\begin{proof} 
	The argument is inspired by the proof of \cite[Proposition 3.2]{PI1}. Setting $R(\cdot)=P^T(\cdot)-P_\theta(\cdot)$ and  $F_\theta(\cdot)=A(\cdot)-B(\cdot) Q^{-1}(\cdot) B^*(\cdot) P_\theta(\cdot)$, we have
	\begin{equation*}
		\left\{\begin{array}{l}
			\dot{R}(t)+F^*_\theta(t) R(t)+R(t) F_\theta(t)-R(t) B(t) Q^{-1}(t) B^*(t) R(t)=0,\;\; 0\leqslant t \leqslant T,\\
			R(T)=P^T(T)-P_\theta(T)=-P_\theta(T)=-P_\theta(T-[T/\theta] \theta),
		\end{array}\right.
	\end{equation*}
	where $[x]$ is the integer part of $x$.
	
	For $ n $ large enough, let $ R_n (\cdot) $ be the solution of the final value problem
	\begin{equation*}
		\left\{\begin{array}{l}
			\dot{R}_n (t)+F^*_{\theta,n}(t) R_n (t)+R_n (t) F_{\theta,n}(t)-R_n (t) B(t) Q^{-1}(t) B^*(t) R_n (t)=0,\;\; 0\leqslant t \leqslant T,\\
			R_n (T)=P^T(T)-P_\theta(T)=-P_\theta(T)=-P_\theta(T-[T/\theta] \theta),
		\end{array}\right.
	\end{equation*}
	where $ F_{\theta,n}(\cdot)= A_n (\cdot)-B(\cdot) Q^{-1}(\cdot) B^*(\cdot) P_{\theta,n}(\cdot) $, $ A_n(\cdot) $ is the Yosida approximation of $ A(\cdot) $, and $ P_{\theta,n}(\cdot) $ is the solution of 
	\begin{equation*}
		\left\{
		\begin{split}
			&\dot{P}_{\theta,n}(t)+F^*_{\theta,n}(t)P_{\theta,n}(t)+P_{\theta,n}(t)F_{\theta,n}(t)+P_{\theta,n}(t)B(t)Q^{-1}(t)B^*(t)P_{\theta,n}(t)+C^*(t)C(t)=0,\;\;t\in (0, \theta), \\
			&P_{\theta,n}(\theta)=P_\theta (0).
		\end{split}
		\right.
	\end{equation*}
	For each $h \in H$, $\tau \in [0,T]$ and $ n $ large enough, let $y_n (\cdot) \in C\big([\tau,T],H\big)$ be the solution of 
	\begin{equation*}
		\left\{\begin{array}{l}
			\dot{y}_n (t)=F_{\theta,n}(t) y_n (t) , \;\; 0 \leqslant \tau \leqslant t \leqslant T,\\
			y_n (\tau)=h \in H.
		\end{array}\right.
	\end{equation*}
	A straightforward computation shows that
	\begin{equation*}
		\begin{aligned}
			\frac{d}{d t}\langle y_n (t), R_n (t) y_n (t)\rangle&=2\langle F_{\theta,n}(t) y_n (t), R_n (t) y_n (t)\rangle\\&\quad-\left\langle y_n (t),\left(F^*_{\theta,n}(t) R_n (t)+R_n (t) F_{\theta,n}(t)-R_n (t) B(t) Q^{-1}(t) B^*(t) R_n (t)\right)y_n (t)\right\rangle \\
			& =\left\langle y_n (t), R_n (t) B(t) Q^{-1}(t) B^*(t) R_n (t) y_n (t)\right\rangle \\
			& =\left\|Q^{-1/ 2}(t) B^*(t) R_n (t) y_n (t)\right\|^2 \geqslant 0,\;\; 0 \leqslant \tau \leqslant t \leqslant T.
		\end{aligned}
	\end{equation*}
	Integrating the above equation from $\tau$ to $T$ and letting $n$ go to infinity, we obtain that
	\begin{equation}\label{dh1}
		\langle y(T), R(T) y(T)\rangle \geqslant\langle y(\tau), R(\tau) y(\tau)\rangle.
	\end{equation}
	Denoting by $y(t):=U_\theta(t, \tau) h$, and by using the exponentially stability of 
	$U_\theta(\cdot,\cdot)$, we obtain from \eqref{dh1} that there exists a constant $ C>0 $ such that
	\begin{equation*}
		\begin{aligned}
			|\langle h, R(\tau) h\rangle| &\leqslant C|\left\langle U_\theta(T, \tau) h, R(T) U_\theta(T, \tau) h\right\rangle| \\
			& \leqslant C\|R(T)\|\left\|U_\theta(T, \tau) h\right\|^2 \\
			& \leqslant C\max _{0 \leq t \leq \theta}\left\|P_\theta(t)\right\|M^2 e^{-2 \rho(T-\tau)} \|h\|^2 .
		\end{aligned}
	\end{equation*}
	This implies that
	\begin{equation*}
		\|R(\tau)\| \leqslant M e^{-\mu(T-\tau)},\;\; 0\leqslant \tau \leqslant T,
	\end{equation*}
	for some positive constants $M$ and $\mu$, and it completes the proof.
\end{proof}

\begin{remark}
As it can be seen from the proof the exponent $\mu$ can be characterized as twice as much as the exponential stability rate for the evolution operator resulting from the Riccati equation in $\eqref{riccat2}$. 
The estimate is inspired from \cite[Part V, Proposition 4.3]{BPDM}, which is concerned about the exponential convergence of the solutions to the differential Riccati equations to its algebraic counterpart.  
\end{remark}

\begin{remark}
The inequality \eqref{dh1},  which is intrumental in the proof of Proposition~\ref{dudic}, is closely related to the dissipativity property, introduced 
in \cite{JCW}, and recently used to derive the turnpike property  (see, e.g., \cite{DGSW, GG1, GG2, T1, TZ1, ZMG}). Using the concept of dissipativity introduced in \cite[Definition 3.3]{ZMG} or \cite[Definition 3]{TZ1}, 
we prove in this remark that, under the assumptions of Theorem \ref{periodictarget}, the optimal control problem $ (LQ)^T $ is dissipative with respect to the $ \theta $-periodic optimal solution $ (y_\theta(\cdot),u_\theta(\cdot)) $ 
of  $(Per)_\theta$, with the \emph{supply rate} function
\begin{equation*}
	\begin{split}
		\omega(t,y,u) = \ell(t,y,u) - \ell(t,y_\theta(t),u_\theta(t)),\quad \forall (t,y,u)\in\mathbb{R}\times H \times U,
	\end{split}
\end{equation*}
where $ \ell(t,y,u) := \frac{1}{2}\Big(\big\|C(t)\big( y-y_d(t)\big) \big\|^2+\big \|Q^{1/2}(t)u\big\|^2\Big)$, and there exists a \emph{storage} function $ S:\mathbb{R}\times H \rightarrow \mathbb{R} $, $ \theta $-periodic in time, which is given by
\begin{equation*}
	S(t,y)=-\langle y-y_\theta(t),P_\theta(t)y_\theta(t)+r_\theta(t)\rangle,\;\;\;\forall (t,y)\in\mathbb R\times H,
\end{equation*} such that 
\begin{equation}\label{dissinequa}
	S(t_0,y(t_0))+\int_{t_0}^{t_1}\omega(t,y(t),u(t))\,dt \geqslant S(t_1,y(t_1)),\quad \text{for all } 0\leqslant t_0\leqslant t_1,
\end{equation}
and for all $ (y(\cdot),u(\cdot)) $ satisfying \eqref{2.1}.

To prove this fact, we first note that $ u_\theta(\cdot)=-Q^{-1}(\cdot)B^*(\cdot)\left( P_\theta(\cdot)y_\theta(\cdot)+r_\theta(\cdot)\right) $, where $ P_\theta(\cdot)$ is the mild solution of \eqref{riccat2} and $r_\theta(\cdot)$ is the periodic solution of \eqref{r_theta}. Following the approximation argument used in the proof of Proposition~\ref{dudic}, we get that
\begin{equation}\label{1}
	\begin{split}
		\frac{d}{dt}\langle P_\theta(t)y_\theta(t)+r_\theta(t),y_\theta(t)\rangle=&-2\ell(t,y_\theta(t),u_\theta(t))- \langle C(t)\big(y_\theta(t)-y_d(t)\big),C(t)
		y_d(t)\rangle, \; \; 0 \leqslant t \leqslant T.
	\end{split}
\end{equation}
Similarly, for any $ (y(\cdot),u(\cdot)) $ satisfying \eqref{2.1}, a straightforward  calculation gives	
\begin{equation*}
	\begin{split}
		\frac{d}{dt}\langle P_\theta(t)y_\theta(t)+r_\theta(t),y(t)\rangle=&
		-\langle C(t)\big(y_\theta(t)-y_d(t)\big),C(t)\big(y(t)-y_d(t)\big)\rangle-\langle Q(t)u_\theta(t),u(t)\rangle \\
		&- \langle C(t)\big(y_\theta(t)-y_d(t)\big),C(t)y_d(t)\rangle,
	\end{split}
\end{equation*}
which implies that
\begin{equation*}
	\begin{split}
		\frac{d}{dt}&\langle P_\theta(t)y_\theta(t)+r_\theta(t),y(t)\rangle
		\geqslant 
		 -\ell(t,y(t),u(t)) - \ell(t,y_\theta(t),u_\theta(t))- \langle C(t)\big(y_\theta(t)-y_d(t)\big),C(t)y_d(t)\rangle\\
		&= -\omega(t,y(t),u(t)) - 2\ell(t,y_\theta(t),u_\theta(t))- \langle C(t)\big(y_\theta(t)-y_d(t)\big),C(t)y_d(t)\rangle.
	\end{split}
\end{equation*}
Combined with \eqref{1}, this yields
\begin{equation*}
	\frac{d}{dt}\langle P_\theta(t)y_\theta(t)+r_\theta(t),y(t)-y_\theta(t)\rangle+\omega(t,y(t),u(t))\geqslant 0.
\end{equation*}
Integrating the above inequality from $ t_0 $ to $ t_1 $, we infer that 
\begin{equation*}
	\begin{split}
		\langle -P_\theta(t_0)y_\theta(t_0)-r_\theta(t_0),y(t_0)-y_\theta(t_0)\rangle+\int_{t_0}^{t_1}\omega(t,y(t),u(t))\,dt 
		\geqslant
		\langle -P_\theta(t_1)y_\theta(t_1)-r_\theta(t_1),y(t_1)-y_\theta(t_1)\rangle,
	\end{split}
\end{equation*}
which gives the dissipativity property \eqref{dissinequa}. 
\end{remark}

\bigskip

We finally show that the evolution operator $U_T (\cdot, \cdot)$ generated by $A(\cdot) - B(\cdot)Q^{-1}(\cdot)B^{*}(\cdot) P^T (\cdot)$ is exponentially stable. 
\begin{proposition}\label{prop3.3}
Under the assumptions of  Lemma \ref{ric1}, there exist two positive constants $M, \omega$ such that
\begin{equation*}
	\|U_T(t,s)\| \leqslant Me^{-\omega (t-s)}, \; \; \forall \; 0\leqslant s \leqslant t \leqslant T.
\end{equation*}
\end{proposition}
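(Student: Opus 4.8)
The plan is to view $U_T(\cdot,\cdot)$ as a perturbation of the exponentially stable evolution operator $U_\theta(\cdot,\cdot)$ furnished by Lemma~\ref{du120505}, the perturbation being uniformly small away from the terminal time $t=T$. Write $F_T(\cdot)=A(\cdot)-B(\cdot)Q^{-1}(\cdot)B^*(\cdot)P^T(\cdot)$ and $F_\theta(\cdot)=A(\cdot)-B(\cdot)Q^{-1}(\cdot)B^*(\cdot)P_\theta(\cdot)$, so that $F_T(t)=F_\theta(t)+D(t)$ with $D(t)=-B(t)Q^{-1}(t)B^*(t)\big(P^T(t)-P_\theta(t)\big)$. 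By Proposition~\ref{dudic} there is $\tilde M>0$, independent of $T$, such that $\|D(t)\|\leqslant\tilde M e^{-\mu(T-t)}$ for $0\leqslant t\leqslant T$; in particular $\|D(r)\|\leqslant\tilde M e^{-\mu\tau}$ whenever $T-r\geqslant\tau$, so $D$ can be made arbitrarily small on $[0,T-\tau]$ by taking $\tau$ large, while the remaining interval $[T-\tau,T]$ keeps a fixed length.

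I would first record two bounds that are uniform in $T$. Since $\tfrac12\langle P^T(t)x,x\rangle$ equals the value at $x$ of the homogeneous ($y_d\equiv0$) LQ problem on $[t,T]$, it is nondecreasing in $T$ (enlarging the horizon only adds nonnegative running cost), and by Lemma~\ref{ric1} it converges to $\tfrac12\langle P_\theta(t)x,x\rangle$; hence $0\leqslant P^T(t)\leqslant P_\theta(t)$ and $\|P^T(t)\|\leqslant p_0:=\sup_{[0,\theta]}\|P_\theta(\cdot)\|$ for every $T>0$ and every $t\in[0,T]$ (this also yields the constant $\tilde M$ above). Next, the $\theta$-periodicity of $U_A(\cdot,\cdot)$ together with its strong continuity and the uniform boundedness principle gives $M_A\geqslant1$, $\omega_A\geqslant0$ with $\|U_A(t,s)\|\leqslant M_A e^{\omega_A(t-s)}$ for all $0\leqslant s\leqslant t$; inserting this bound and $\|P^T\|\leqslant p_0$ into the variation-of-constants identity $U_T(t,s)=U_A(t,s)-\int_s^t U_A(t,r)B(r)Q^{-1}(r)B^*(r)P^T(r)U_T(r,s)\,dr$ and applying Gronwall's inequality produces a uniform short-time bound $\|U_T(t,s)\|\leqslant M_1 e^{\omega_1(t-s)}$ for all $0\leqslant s\leqslant t\leqslant T$, with $M_1,\omega_1$ independent of $T$.

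On the bulk interval I would perturb off $U_\theta$. By Lemma~\ref{du120505}, $\|U_\theta(t,s)\|\leqslant M_\theta e^{-\rho(t-s)}$, and the identity $U_T(t,s)=U_\theta(t,s)+\int_s^t U_\theta(t,r)D(r)U_T(r,s)\,dr$ gives, for $0\leqslant s\leqslant t\leqslant T-\tau$ (so that $\|D(r)\|\leqslant\tilde M e^{-\mu\tau}$ on $[s,t]$), the estimate $e^{\rho(t-s)}\|U_T(t,s)\|\leqslant M_\theta+M_\theta\tilde M e^{-\mu\tau}\int_s^t e^{\rho(r-s)}\|U_T(r,s)\|\,dr$, whence Gronwall yields $\|U_T(t,s)\|\leqslant M_\theta e^{-(\rho-M_\theta\tilde M e^{-\mu\tau})(t-s)}$. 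Fixing $\tau=\tau_*$ so large that $M_\theta\tilde M e^{-\mu\tau_*}\leqslant\rho/2$, we get $\|U_T(t,s)\|\leqslant M_\theta e^{-(\rho/2)(t-s)}$ for all $0\leqslant s\leqslant t\leqslant T-\tau_*$, with constants independent of $T$.

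Finally I would patch the terminal piece: for $T-\tau_*<t\leqslant T$ write $U_T(t,s)=U_T(t,\sigma)\,U_T(\sigma,s)$ with $\sigma:=\max\{s,T-\tau_*\}$, estimate the first factor by $M_1 e^{\omega_1\tau_*}$ (its time-span is $\leqslant\tau_*$) and the second by the bulk estimate (trivially if $\sigma=s$), and use $t-s\leqslant(\sigma-s)+\tau_*$; this gives $\|U_T(t,s)\|\leqslant M e^{-\omega(t-s)}$ for all $0\leqslant s\leqslant t\leqslant T$ with $\omega:=\rho/2$ and $M$ a constant built from $M_1,M_\theta,\omega_1,\rho,\tau_*$, none of which depends on $T$ (the case $T\leqslant\tau_*$ being already covered by the short-time bound). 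As in the proof of Proposition~\ref{dudic}, the differential manipulations above are legitimized by passing through the Yosida approximations $A_n$. The only genuine difficulty is keeping all constants independent of $T$; the two delicate points are the uniform bound $\|P^T(t)\|\leqslant p_0$, obtained from monotonicity in $T$ and Lemma~\ref{ric1}, and the uniform short-time bound on $U_T$, obtained from the $\theta$-periodicity of $U_A$.
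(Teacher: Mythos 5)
Your proposal is correct and follows essentially the same strategy as the paper: treat $U_T(\cdot,\cdot)$ as a perturbation of the exponentially stable $U_\theta(\cdot,\cdot)$ by the term $B Q^{-1} B^*\big(P_\theta-P^T\big)$, which Proposition~\ref{dudic} makes exponentially small away from $t=T$, then run Gronwall separately on the bulk interval $[0,T-\tau_*]$ and on a terminal layer of fixed length, and patch the two regimes via the evolution property. The only cosmetic differences are that the paper obtains the terminal-layer bound by Gronwall off $U_{F_\theta+\lambda I}$ rather than off $U_A$ (so it never needs your uniform bound $\|P^T(t)\|\leqslant p_0$), and it absorbs the exponential weight through the substitution $y(t)=e^{\lambda(t-\tau)}x(t)$ instead of multiplying by $e^{\rho(t-s)}$.
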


\begin{proof}
The proof borrows arguments from \cite[Section 2]{PZ} (see also \cite[Lemma 18]{GL}).
Since the operator $A(\cdot)-B(\cdot) Q^{-1}(\cdot)B^*(\cdot)P_\theta(\cdot)$ is exponentially stable, there exist $M>0$ and  $\rho>0$ such that
\begin{equation*}
	\left\|U_\theta(t, s)\right\| \leqslant M e^{-\rho(t-s)},\;\;0\leqslant s \leqslant t \leqslant T.
\end{equation*}
For each $h \in H$, let $x(\cdot) \in C([\tau,T];H)$ be the solution of
\begin{equation*}
	\left\{\begin{array}{l}
		\dot{x}(t)=\left(A(t)-B(t) Q^{-1}(t) B^*(t) P^T(t)\right) x(t) ,\\
		x(\tau)=h.
	\end{array}\right.
\end{equation*}
Fix a constant $\lambda \in (0, \rho)$ so that $A(\cdot)-B(\cdot) Q^{-1}(\cdot)B^*(\cdot)P_\theta(\cdot)+\lambda I$ is also exponentially stable. Let $y(t)=e^{\lambda (t-\tau)} x(t)$, $\tau\leqslant t\leqslant T$. A straightforward computation shows that
\begin{equation*}
	\left\{\begin{array}{l}
		\dot{y}(t)=\left(A(t)-B(t) Q^{-1}(t) B^*(t) P_\theta(t)+\lambda I\right) y(t)+B(t) Q^{-1}(t) B^*(t)\left[P_\theta(t)-P^T(t)\right] y(t) ,\\
		y(\tau)=h.
	\end{array}\right.
\end{equation*}
The solution of the above equation is given by
\begin{equation*}
	y(t)=U_{F_\theta+\lambda I}(t, \tau) h+\int_\tau^t U_{F_\theta+\lambda I}(t, s) B(s) Q^{-1}(s) B^*(s)\left[P_\theta(s)-P^T(s)\right] y(s) ds, \;\; \tau \leq t \leq T.
\end{equation*}
We obtain from $\eqref{prop1}$ and $\eqref{prop2}$ that
\begin{equation}\label{*}
	\begin{aligned}
		\|y(t)\| &\leqslant M e^{-(\rho-\lambda)(t-\tau)}\|h\|+\int_\tau^t M e^{-(\rho-\lambda)(t-s)}\left\|B(s) Q^{-1}(s) B^*(s)\right\| \left\|P_\theta(s)-P^T(s)\right\| \|y(s)\| ds \\ 
		&\leqslant M_1\|h\|+M_2 \max_{0\leq t\leq \theta}\left\|B(t) Q^{-1}(t) B^*(t)\right\|  \max_{0\leq t\leq \theta}\left\|P_\theta(t)\right\|\int_\tau^t e^{-(\rho-\lambda)(t-s)} e^{-\mu(T-s)}\|y(s)\|ds\\  
		&\leqslant C_1 \|h\|+C_2 e^{-\mu(T-t)} \int_\tau^t \|y(s)\| ds.
	\end{aligned}
\end{equation}

Now, we fix a constant $S>0$ so that $C_2 e^{-\mu S} < \lambda$. To end the proof, we distinguish between three cases.  

\textbf{Case 1.} When $ T-S \leqslant \tau \leqslant t\leqslant T.$

We obtain from $\eqref{*}$ that
\begin{equation*}
	\|y(t)\| \leq C_1\|h\|+C_2 \int_\tau^t\|y(s)\| d s.
\end{equation*}
Since $t-\tau \leqslant S$, by applying the Gronwall inequality, we get
\begin{equation*}
	\|y(t)\| \leqslant C_1\|h\| e^{C_2(t-\tau)} \leqslant C_1 e^{C_2 S}\|h\|.
\end{equation*}
We deduce that
\begin{equation}
	\begin{aligned}\label{case1}
		\left\|U_{A-B Q^{-1} B^* P^T}(t, \tau) h\right\|&=\|x(t)\| \leqslant\|y(t)\| \leqslant C_1 e^{C_2 S}\|h\| \\
		& \leq C_1 e^{\left(C_2+1\right) S} e^{-(t-\tau)}\|h\|. 
	\end{aligned}
\end{equation}

\textbf{Case 2.} When $\tau \leqslant t \leqslant T-S$.

From $\eqref{*}$, we obtain
\begin{equation*}
	\|y(t)\| \leqslant C_1\|h\|+C_2 e^{-\mu S} \int_\tau^t\|y(s)\| d s.
\end{equation*}
By applying the Gronwall inequality, we get
\begin{equation*}
	\|y(t)\| \leqslant C_1 e^{C_2 e^{-\mu S}(t-\tau)}\|h\|.
\end{equation*}
Recalling that $C_2 e^{-\mu S}<\lambda$, we obtain
\begin{equation}
	\begin{aligned}\label{case2}
		\left\|U_{A-B Q^{-1} B^* P^T}(t, \tau)h\right\|&=\|x(t)\|=e^{-\lambda(t-\tau)}\|y(t)\| \\ 
		&\leqslant C_1 e^{-\left(\lambda-C_2 e^{-\mu S} \right) (t-\tau)}\|h\|.
	\end{aligned}
\end{equation}

\textbf{Case 3.} When $\tau < T-S < t \leqslant T$. 

We infer from the definition of the evolution operator that
\begin{equation*}
	U_{A-B Q^{-1} B^* P^T}(t, \tau)=U_{A-B Q^{-1} B^* P^T}(t, T-S)U_{A-B Q^{-1} B^* P^T}(T-S, \tau).
\end{equation*}
Applying Case 1 to  $U_{A-B Q^{-1} B^* P^T}(T-S, \tau)$ and Case 2 to $U_{A-B Q^{-1} B^* P^T}(t, T-S)$, we get
\begin{equation*}
	\begin{aligned}
		\left\|U_{A-B Q^{-1} B^* P^T}(t, \tau)h\right\|&=\left\|U_{A-B Q^{-1} B^* P^T}(t, T-S)U_{A-B Q^{-1} B^* P^T}(T-S, \tau)h\right\|\\
		&\leqslant \left\|U_{A-B Q^{-1} B^* P^T}(t, T-S)\right\| \left\|U_{A-B Q^{-1} B^* P^T}(T-S, \tau)h\right\|\\
		&\leqslant C_1 e^{(C_2+1)S} C_1 e^{\left(C_2 e^{-\mu S}-\lambda \right) (T-S-\tau)}\|h\|\\
		&= C_1^2 e^{(C_2+1)S} e^{-\left(C_2 e^{-\mu S}-\lambda \right)S} e^{\left(C_2 e^{-\mu S}-\lambda \right) (T-\tau)} \|h\|\\
		&\leqslant C_1^2 e^{(C_2+1)S} e^{-\left(C_2 e^{-\mu S}-\lambda \right)S} e^{-\left(\lambda-C_2 e^{-\mu S} \right) (t-\tau)} \|h\|.
	\end{aligned}
\end{equation*}
This estimate, along with $\eqref{case1}$ and $\eqref{case2}$, leads to
\begin{equation*}
	\|U_T(t,s)\| \leqslant Me^{-\omega (t-s)}, \; \; \forall \; 0\leqslant s \leqslant t \leqslant T,
\end{equation*}
for some suitable positive constants $M$ and $\omega$ not depending on $ T $. It completes the proof.
\end{proof}

\section{Proof of Theorem~\ref{periodictarget}}\label{proof}
In order to prove the exponential  periodic turnpike property, we represent $y_\theta(\cdot)$ and $y^T(\cdot)$ in terms of the evolution operator  $U_\theta(\cdot,\cdot)$. For each $t \in (0,T)$, according to Lemma~$ \ref{du120505} $, we can write
\begin{equation}\label{y_theta}
\begin{aligned}
	\;\;\;\;y_\theta(t) 
	= -U_\theta(t,0)\int_{-\infty}^0 U_\theta(0,s)B(s)Q^{-1}(s)B^*(s)r_\theta(s)  ds-\int_0^t U_\theta(t,s)B(s)Q^{-1}(s)B^*(s)r_\theta(s)ds,
\end{aligned}
\end{equation}
and for each $t \in (0,T)$, we write $ \eqref{y_T_equ} $ as 
\begin{equation*}
\dot{y}^T(t) = \big(A(t)-B(t)Q^{-1}(t)B^*(t)P_\theta       (t)\big)y^T(t)+B(t)Q^{-1}(t)B^*(t)\Big[\left(P_\theta(s)-P^T(s)\right)y^T(s)-r^T(t)\Big],
\end{equation*}
hence
\begin{equation}\label{y_T}
y^T(t) = U_\theta(t,0)y_0 + \int_0^t U_\theta(t,s)B(s)Q^{-1}(s)B^*(s)\Big[ \left(P_\theta(s)-P^T(s)\right)y^T(s)-r^T(s)\Big] ds.
\end{equation}

From $\eqref{y_theta}$ and $\eqref{y_T}$, for each $t \in (0,T)$, we infer that
\begin{equation*}
	\begin{aligned}
		y_\theta(t) - y^T(t) &= U_\theta(t,0)\left(y_\theta(0)-y_0\right)-\\&\quad\int_0^t U_\theta(t,s)B(s)Q^{-1}(s)B^*(s)\left[\left( r_\theta(s)-r^T(s)\right) + \left(P_\theta(s)-P^T(s)\right)y^T(s)\right] ds\\
		& \triangleq I_1+I_2.
	\end{aligned}
\end{equation*}
Notice that $z(\cdot)=\left( r_\theta(\cdot)-r^T(\cdot)\right)+\big(P_\theta(\cdot)-P^T(\cdot)\big)y^T(\cdot)$ is the solution of $\eqref{adj}$ with $z(T_0)=r_\theta(T-[T/\theta]\theta)+P_\theta(T-[T/\theta]\theta)y^T(T)$, we have the estimates 
\begin{equation*}
	\|I_1\| \leqslant M e^{- \rho t}\|y_\theta(0)-h\| \triangleq C_1\|y_\theta(0)-y_0\|e^{- \rho t}
\end{equation*}
and
\begin{equation*}
	\|I_2\| \leqslant \int_0^t M e^{-\rho(t-s)} \|B(s)Q^{-1}(s)B^*(s)\|\|z(s)\|ds.
\end{equation*}
We infer from $\eqref{adjexp}$ that
\begin{equation*}
	\begin{aligned}
		\|I_2\| &\leqslant \int_0^t M e^{-\rho(t-s)} \|B(s)Q^{-1}(s)B^*(s)\| M e^{-\rho(T-s)} \|r_\theta(T-[T/\theta]\theta)+P_\theta(T-[T/\theta]\theta)y^T(T)\|ds\\
		&\leqslant M^2 \max_{0\leq t \leq \theta}\|B(t)Q^{-1}(t)B^*(t)\| \max_{0\leq t \leq \theta}\big(\|P_\theta(t)\|\|y^T(T)\|+\|r_\theta(t)\|\big) \int_0^t  e^{-\rho(t-s)} e^{-\rho(T-s)} ds\\
		&\leqslant C\left( 1+\|y^T(T)\|\right) e^{-\rho(T-t)}\\
		& \leqslant C_2\left( 1+\|y_0\|\right) e^{-\rho(T-t)} ,
	\end{aligned}
\end{equation*}
where the last inequality is obtained thanks to Proposition~$\ref{prop3.3}$.
Hence, the above two estimates imply that
\begin{equation}\label{turnpike_y}
	\left\|y_\theta(t)-y^T(t)\right\| \leqslant 
	C_1\|y_\theta(0)-y_0\|e^{- \rho t} + C_2\left( 1+\|y_0\|\right) e^{-\rho(T-t)}
	, \;\; \forall t \in(0,T),
\end{equation}
where $C_1$ and $C_2$ are positive constants not depending on $T$. 

Next, we obtain from $\eqref{adT}$ and $\eqref{adp}$ that
\begin{equation}\label{turnpike_p}
	\begin{aligned}
		\left\| \lambda^T(t)-\lambda_\theta(t)\right\| &=\left\|P_\theta(t)\left(y^T(t)-y_\theta(t) \right) -z(t) \right\| \\ 
		&\leqslant \max_{0 \leq t \leq \theta}\|P_\theta(t)\|\left\|y_\theta(t)-y^T(t)\right\|+\left\|z(t)\right\|\\ 
		&\leqslant  
		C_3\|y_\theta(0)-y_0\|e^{- \rho t} + C_4\left( 1+\|y_0\|\right)e^{-\rho(T-t)}, \;\; \forall t \in(0,T),
	\end{aligned}
\end{equation}
where $C_3$ and $C_4$ are positive constants not depending on $T$.

Finally, we obtain from $\eqref{Toc}$ and $\eqref{poc}$ that
\begin{equation}\label{turnpike_u}
	\begin{aligned} 
		\|u_\theta(t)&-u^T(t)\| \leqslant
		\left\|Q^{-1}(t)B^*(t)\right\|
		\left\|P_\theta(t)\left(y^T(t)-y_\theta(t) \right) -z(t) \right\|\\
		&\leqslant  
		\left\|Q^{-1}(t)B^*(t)\right\|\big[\left\|P_\theta(t)\big(y_\theta(t)-y^T(t)\big)\right\|+\left\|z(t)\right\|\big]\\
		&\leqslant  
		\max_{0 \leq t \leq \theta}\left\|Q^{-1}(t)B^*(t)\right\|\Big[\max_{0 \leq t \leq \theta}\|P_\theta(t)\|\left\|y_\theta(t)-y^T(t)\right\|+\left\|z(t)\right\|\Big]\\
		&\leqslant  
		C\left(\left\|y_\theta(t)-y^T(t)\right\|+\left\|z(t)\right\|\right)\\ 
		&\leqslant  
		C_5\|y_\theta(0)-y_0\|e^{- \rho t} + C_6\left( 1+\|y_0\|\right)e^{-\rho(T-t)}, \;\; \forall t \in(0,T),
	\end{aligned}
\end{equation}
where $C_5$ and $C_6$ are positive constants not depending on $T$.
This estimate, combined with $\eqref{turnpike_y}$ and $\eqref{turnpike_p}$, finally leads to the exponential periodic turnpike inequality \eqref{10271}. 

\section{Further comments}\label{comment}
\subsection{Nonlinear case}\label{nonlinear}
In this paper, we established the globally exponential and periodic turnpike property for linear quadratic periodic optimal control problems in an abstract framework.  The locally periodic turnpike property for 
finite-dimensional nonlinear cases could be obtained 
as in \cite{PZ2,TZ1,TZZ} by linearization along the optimal periodic trajectory, under some exponential stabilizability and detectability assumptions, as well as some smallness assumptions.  As for nonlinear infinite-dimensional case, however, due to the lack of compactness, it seems difficult to establish the periodic turnpike result, and the question is open.

\subsection{Unbounded control operators}
An open and challenging problem is to extend our results to unbounded control operators. This situation involves the theory of differential Riccati equations with unbounded control operators which is incomplete so far. We refer the reader to the case of analytic semigroups in \cite{TZZ}, however, we have no idea if such an extension is feasible in the  periodic case.


\end{document}